\documentclass[a4paper]{amsart}
\usepackage[T1]{fontenc}
\usepackage[utf8]{inputenc}

\usepackage{amsmath}
\usepackage{amssymb}
\usepackage{amsthm}

\usepackage{mathpazo,courier}
\usepackage[scaled]{helvet}

\usepackage[numbers,sort&compress]{natbib}

\usepackage{hyperref}
\usepackage{enumerate}

\sloppy

\newcommand{\N}{\mathbb{N}}
\newcommand{\Z}{\mathbb{Z}}

\newcommand{\R}{\mathbb{R}}
\newcommand{\C}{\mathbb{C}}

\newcommand{\A}{\mathbb{A}}
\renewcommand{\P}{\mathbb{P}}

\newcommand{\cV}{\mathcal{V}}
\newcommand{\cI}{\mathcal{I}}
\newcommand{\cC}{\mathcal{C}}

\newcommand\ph\varphi
\newcommand\ps\psi
\newcommand\ep\varepsilon
\newcommand\rh\varrho
\newcommand\al\alpha
\newcommand\be\beta
\newcommand\ga\gamma
\newcommand\om\omega
\newcommand\ta\tau
\renewcommand\th\vartheta
\newcommand\de\delta
\newcommand\ze\zeta
\newcommand\ch\chi
\newcommand\et\eta
\newcommand\io\iota
\newcommand\la\lambda
\newcommand\si\sigma

\newcommand\Ga\Gamma
\newcommand\De\Delta
\newcommand\Th\Theta
\newcommand\La\Lambda
\newcommand\Si\Sigma
\newcommand\Ph\Phi
\newcommand\Ps\Psi
\newcommand\Om\Omega

\DeclareMathOperator\im{im}

\DeclareMathOperator\Cl{Cl}
\DeclareMathOperator\Tr{Tr}
\DeclareMathOperator\Pic{Pic}
\DeclareMathOperator\Div{Div}

\DeclareMathOperator\Sym{Sym}

\newcounter{thmctr}
\newtheorem{thm}[thmctr]{Theorem}
\newtheorem{prop}[thmctr]{Proposition}

\newtheorem{lem}[thmctr]{Lemma}

\theoremstyle{definition}
\newtheorem{dfn}[thmctr]{Definition}
\newtheorem{rem}[thmctr]{Remark}
\newtheorem{ass}[thmctr]{Assumption}

\renewenvironment{proof}[1][\unskip]{\par\noindent {\em Proof #1: }}{{\qed\bigskip}}

\title[Definite determinantal Representations]{\normalsize Definite determinantal Representations of ternary hyperbolic Forms}
\date{\small\today}
\author{\small Christoph Hanselka}
\address{Universität Konstanz, 78457 Konstanz, Germany}
\email{christoph.hanselka@uni-konstanz.de}
\subjclass[2010]{Primary: 14P99 Secondary 11E39, 90C22}
\keywords{hyperbolic polynomials, determinantal representations, characteristic polynomials of symmetric matrices, different ideal, Hermite matrix}

\begin{document}
\begin{abstract}
	Hyperbolic polynomials are homogeneous polynomials whose zeros restricted to lines in certain directions are all real. These directions form a convex cone, the hyperbolicity cone. Examples of these hyperbolicity cones are spectrahedral cones, the feasible sets of semidefinite programming. Whether these are the only examples is unknown in general. It is true in the three-dimensional case as follows from the positive solution of the Lax conjecture, namely the existence of definite determinantal representations of ternary hyperbolic polynomials, proven by Helton and Vinnikov in 2007. We give the first completely algebraic proof of this fact, with mostly elementary methods.
\end{abstract}
\maketitle

\section{Introduction}

Coming from the theory of hyperbolic differential equations Peter Lax conjectured in \citep{Lax58} what was later essentially proven in \citep{Helton_Vinnikov07}:


\begin{thm}[Helton-Vinnikov]\label{thm:heltonvinnikov}
	Every ternary form that is hyperbolic with respect to $e\in\R^3$ admits a linear determinantal representation that is definite at $e$.
\end{thm}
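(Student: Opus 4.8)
Picking coordinates with $e=(1,0,0)$ and rescaling, we may assume $f=x^{d}+a_{1}(y,z)x^{d-1}+\dots+a_{d}(y,z)$ with each $a_{i}$ homogeneous of degree $i$. A linear determinantal representation definite at $e$ is then precisely an identity $f=\det(xI_{d}+yB+zC)$ with $B,C\in\Sym_{d}(\R)$; equivalently, $f$ is the characteristic polynomial of the symmetric matrix $-(yB+zC)$ over $\R(y,z)$, whose entries are homogeneous linear forms. The value of this pencil at $e$ is $I_{d}$, so once we force the leading coefficient in $x$ to be the scalar matrix $I_{d}$, definiteness at $e$ is automatic; conversely, an arbitrary symmetric representation of a monic $f$ only gives $\det A_{0}=1$ for its coefficient matrix $A_{0}$ of $x$, and such an $A_{0}$ is usually indefinite — so \emph{the real task is to make $A_{0}$ positive definite, and it is here, not in smoothness, that hyperbolicity must be spent}. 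Since the forms hyperbolic at $e$ of a fixed sign at $e$ form (classically) a closed convex cone whose interior is the strictly hyperbolic locus, and a strictly hyperbolic form is smooth (a singular point $p$ would force the line $\overline{ep}$ to meet the curve at $p$ with a non‑simple root), every hyperbolic $f$ is a limit of smooth hyperbolic $f_{k}$, still monic in $x$. In a definite representation $xI_{d}+yB_{k}+zC_{k}$ of $f_{k}$ the eigenvalues of the symmetric matrices $B_{k}$ and $C_{k}$ are, up to sign, roots of $f_{k}(\cdot,1,0)$ resp.\ $f_{k}(\cdot,0,1)$, hence bounded as $f_{k}\to f$, so a subsequence converges to a definite representation of $f$. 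It therefore suffices to treat $f$ smooth, hence irreducible; then $L:=\R(y,z)[x]/(f)$ is a field, separable of degree $d$ over $K:=\R(y,z)$.

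\textbf{From the Hermite form to a self‑dual ideal.} Let $R=\R[y,z]$ and $S=\R[x,y,z]/(f)$, free over $R$ on $1,x,\dots,x^{d-1}$. Let $H(y,z)\in\Sym_{d}(R)$ be the Hermite (Bezout) matrix, with entries the power sums $\Tr_{S/R}(x^{i+j})$; it is graded, $\det H=\pm\Res_{x}(f,\partial_{x}f)$, and multiplication by $x$ — the companion matrix $X$ of $f$ — is self‑adjoint for it: $X^{\mathrm t}H=HX$. By Hermite's theorem the signature of $H(y_{0},z_{0})$ is the number of real roots of $f(\cdot,y_{0},z_{0})$ whenever the latter is squarefree, so, that locus being dense, $f$ is hyperbolic at $e$ \emph{if and only if} $H(y,z)\succeq0$ for all real $(y,z)$. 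The point is that multiplication by $x$ remains self‑adjoint for the trace pairing $(u,v)\mapsto\Tr_{L/K}(uv)$ on \emph{any} fractional $S$‑ideal $I$; since the different of $S/R$ is the principal ideal $(\partial_{x}f)$, this pairing is $R$‑valued as soon as $I^{2}\subseteq\mathfrak d_{S/R}^{-1}=\tfrac{1}{\partial_{x}f}S$, and it is \emph{unimodular} exactly when $I^{2}\cong\mathfrak d_{S/R}^{-1}$, i.e.\ when the twist $I(-1)$ is a theta characteristic of the curve. If moreover $I$ is free over $R$ on $d$ generators of one and the same degree, then after shifting $I\cong R^{d}$ with generators in degree $0$: the trace pairing is a constant $T\in\GL_{d}(\R)$, multiplication by $x$ is a $d\times d$ matrix $\widetilde X$ of homogeneous linear forms with $\widetilde X^{\mathrm t}T=T\widetilde X$ and characteristic polynomial $f$, and if $T\succ0$ then, writing $T=P^{\mathrm t}P$ with $P\in\GL_{d}(\R)$, the matrix $-P\widetilde XP^{-1}$ is symmetric with homogeneous linear entries and $\det(xI_{d}-P\widetilde XP^{-1})=f$: a determinantal representation definite at $e$.

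\textbf{The crux.} Everything is now reduced to one existence statement: there is a fractional ideal $I$ that is \emph{at once} balanced (free over $R$ on $d$ generators of equal degree, equivalently giving an honest linear, rather than higher‑degree, representation) and \emph{positive} ($T\succ0$). The first property is the algebraic substance of Dixon's theorem — existence of an ineffective even theta characteristic yielding a linear symmetric representation — which this proof has to recover purely from the arithmetic of $S/R$ by a Riemann–Roch / Hilbert‑series count. The second is where hyperbolicity is unavoidable, and I expect it to be the technical core of the paper and the algebraic replacement for Helton–Vinnikov's transcendental argument about the nested real ovals of a hyperbolic curve: the theta characteristics form a torsor under the $2$‑torsion of the Picard group, the real ones a sub‑torsor, and one must show the coset with $T\succ0$ is nonempty. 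Since $T$ is a trace form twisted by $I$, its isometry class over $\R(y,z)$ is governed by its determinant (a square class one can adjust through the choice of $I$), by the signatures at the orderings of $\R(y,z)$ — which $H\succeq0$ forces to be maximal, this being the only use of hyperbolicity — and by the remaining Witt‑theoretic invariants of a trace form; the difficulty is to realize all of these by a single, \emph{simultaneously balanced} $I$, using only elementary quadratic‑form theory over $\R(y,z)$ or an explicit induction. The guiding instance is $d=2$, where hyperbolicity says $b^{2}-4c$ is a positive semidefinite binary quadratic form, every such form is a sum of two squares of real linear forms, and that very decomposition gives $f=\det\begin{pmatrix}x+p&q\\q&x+r\end{pmatrix}$; the general case should be set up as an induction on $d$ on top of this, descending from $d$ to $d-1$ by peeling off a real linear factor of a generic restriction of $f$ (an interlacer, related to $\partial_{e}f$).
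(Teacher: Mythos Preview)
Your reduction to the smooth case by a limit argument is essentially the paper's (density of smooth hyperbolic forms plus a properness/compactness argument on the representing matrices), and your identification of the Hermite form and of the trace pairing on a self-dual fractional ideal is the right framework. But the section you label \textbf{The crux} is not a proof: you correctly isolate the two requirements on the ideal $I$ --- that it be \emph{balanced} (free on $d$ generators of equal degree, so the resulting representation is linear) and \emph{positive} (the constant Gram matrix $T$ is definite) --- and then say you ``expect'' this to be the technical core, sketch invariants that would have to be matched, and propose an induction on $d$ that is not carried out. As written this is exactly the difficulty in the classical Vinnikov/Helton--Vinnikov approach (existence of a real ineffective theta characteristic of the right parity), and you have not supplied an argument for it.

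The paper sidesteps precisely this difficulty, and the mechanism is worth knowing. Instead of working over $R=\R[y,z]$ (two variables, not a PID), it dehomogenizes once more and works over $R=\R[X]$: set $f(X,T):=F(X,1,T)$. Over a PID every nonzero fractional $S$-ideal is automatically free of rank $d$, so ``balanced'' is never an issue, and every unimodular symmetric bilinear form over $\R[X]$ diagonalizes with entries in $\R^{\times}$ (Harder/Scharlau). The square root of the different is produced not by a Riemann--Roch count but by divisibility of the ideal class group of the \emph{complex} affine curve: strict real-rootedness forces $(f'(\alpha))_{S_{\C}}=\overline{J_0}J_0$, divisibility gives $J_0=eJ_1^2$, and then $c:=\bar e\,e$ is a sum of two squares in $L$ with $I^2=(c/f'(\alpha))$. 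Positivity of the diagonal entries then drops out of Hermite positivity of $f(x,T)$ for real $x$ --- this is the sole use of hyperbolicity. The price is that the matrix $A\in S_d$ one obtains has a priori uncontrolled degree in $X$; the paper recovers linearity \emph{a posteriori} by the elementary observation (Lemma~\ref{lem:proper}) that a real symmetric matrix over $\R[X]$ whose characteristic polynomial lies in $\R[X,T]_{(1,d)}$ must already have entries of degree $\le 1$. In short: your two hard constraints (balanced and positive) are decoupled by passing to one affine variable --- freeness is automatic, positivity is forced by Hermite, and linearity is a free consequence of the degree bound on eigenvalues.
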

\noindent
A form (i.e. a homogeneous polynomial) $F\in\R[X,Y,Z]$ of degree $d$ is called \emph{hyperbolic} with respect to some direction $e\in \R^3$ if $F(e)>0$ and for all $a\in\R^3$ the univariate polynomial $F(Te-a)\in\R[T]$ has only real roots. A \emph{linear determinantal representation} of $F$ is a matrix of the form $XA+YB+ZC$, where $A,B,C\in\R^{d\times d}$ such that
\[F=\det(XA+YB+ZC)\]
The representation is called \emph{definite} at $e=(x,y,z)$ if $xA+yB+zC$ is definite, in particular symmetric.

\medskip If $F$ is hyperbolic with respect to direction $e$, then the \emph{hyperbolicity cone} of $F$ is the connected component of $e$ in $\{\, a\in\R^3 \mid F(a)\neq0\,\}$. If $F$ admits a linear determinantal representation $M:=XA+YB+ZC$ definite at $e$, then this cone coincides with the \emph{spectrahedral cone}
\[\{\, a\in\R^3 \mid M(a)\text{ is positive definite}\,\}\]
of $M$. Semidefinite programming is optimization over such a cone. The problem of finding a spectrahedral description of hyperbolicity cones is strongly connected to finding definite determinantal representations of hyperbolic polynomials. For a survey about this and related topics see \citep{Vinnikov12}. The original proof of \citep[Theorem 2.2]{Helton_Vinnikov07} by Helton and Vinnikov is mostly based on results and methods from \citep{Vinnikov93} as well as \citep{Ball_Vinnikov99} using the theory of theta functions on the Jacobian variety of the compact Riemann surface defined by $F$. The formulation, as it appears in \citep{Helton_Vinnikov07}, differs from the way it is stated in Theorem \ref{thm:heltonvinnikov}. The equivalence of both versions and the fact that it solved the Lax conjecture has been noted in \citep{Lewis05}.

\medskip
The setup in our approach is based on an affine (nonhomogeneous) analogue to hyperbolicity and to the above notion of linear determinantal representations. In some sense it will be slightly more general, since it will also cover certain nonlinear determinantal representations. Our notions are similar to those in \citep{Vinnikov93} and other related articles, but have one technical advantage, which makes the involved arguments considerably easier.

A difference is the role that degrees play in the two kinds of determinantal representations. In \citep{Helton_Vinnikov07} and \citep{Vinnikov93} and others they are \textbf{linear} by construction. The strategy of the proof based on the work in \citep{Vinnikov93} is to classify all symmetric linear determinantal representations by relating them to certain real theta characteristics on the \textbf{projective} curve defined by the polynomial in question. One difficulty is then to show that among these representations there are also \textbf{definite} ones. The condition for such a theta characteristic to give rise to a determinantal representation is that it has no \textbf{global sections}. The arguments that are used in \citep{Vinnikov93} to show their existence involve the theory of theta functions on the Jacobian variety of the curve, which seems rather advanced and is nonalgebraic in nature. The objects in the present note that correspond to the above mentioned theta characteristics are certain fractional ideals over the coordinate ring of the \textbf{affine} curve defined by the polynomial. Their existence is shown in the proof of Theorem~\ref{thm:smoothrealrooted} and is based on Theorem~\ref{thm:divisible}, the divisibility of the class group of the curve, which is also not quite elementary, however much more well-known to algebraic geometers and can be treated in a completely algebraic manner. In particular, the proof works in the context of any real closed field instead of the field of real numbers $\R$. The problem of \textbf{global behavior} of the elements in these fractional ideals does not arise unless one wants to relate it directly to the \textbf{degree} of the entries in a determinantal representation of the polynomial. The latter can however be analyzed also with much more elementary methods. This will be done in Lemma~\ref{lem:proper}. The kind of determinantal representation that we are going to consider are \textbf{definite} by construction. And Lemma~\ref{lem:proper} leads to the observation that these are automatically \textbf{linear} in the case of hyperbolic polynomials.

The methods used in the following are partially inspired by work done by Bender in \citep{Bender67_rings} about characteristic polynomials of symmetric matrices over integral domains, and particularly over the ring of integers.

\begin{dfn}
	We call a univariate polynomial $f\in\R[T]$ \emph{(strictly) real rooted}, if it is monic and has only real (and simple) roots. A two variate polynomial $f\in\R[X,T]$ is \emph{(strictly) $T$-real rooted on $M\subseteq \R$}, if for all $x\in M$ the univariate polynomial $f(x,T)$ is (strictly) real rooted. If $M=\R$ we omit ``on $M$''.
\end{dfn}

\medskip
We do immediately get examples of such $T$-real rooted polynomials, namely the characteristic polynomials $\det(TI_d-A)$ of symmetric matrices $A\in\R[X]^{d\times d}$, as follows immediately by the well known fact that real symmetric matrices have only real eigenvalues. This leads to the following
\begin{dfn}
	Let $f\in\R[X,T]$ be monic and of degree $d$ in $T$. A \emph{$T$-spectral determinantal representation of $f$} is a matrix of the form $TI_d-A$, where $A\in \R[X]^{d\times d}$ such that
	\[f=\det(TI_d-A)\]
	It is called \emph{symmetric}, if $A$ is symmetric.
\end{dfn}
Note that the entries of $A$ can have arbitrary degree, i.e. they need not be linear. It is easy to see, that polynomials admitting a definite linear determinantal representation are hyperbolic. The Helton-Vinnikov Theorem states the converse. Likewise the main result of the present work is the following question. If we are given a $T$-real rooted polynomial in $\R[X,T]$, does it admit a symmetric $T$-spectral determinantal representation? We will give a positive answer to this problem, from which also Theorem~\ref{thm:heltonvinnikov} will follow.

\medskip
In Section~\ref{sec:basics} we will introduce the notation that is used throughout the text as well as some basic concepts and results from linear and bilinear algebra. Section~\ref{sec:smoothrealrooted} will contain the crucial step, concerning real rooted polynomials defining a smooth affine curve. Section~\ref{sec:approx} will show how one can approximate real rooted polynomials by strictly real rooted ones, defining a smooth curve. The result of the previous two sections will be combined in Section~\ref{sec:refine} yielding the main result, Theorem~\ref{thm:realrooted}, for arbitrary real rooted polynomials, and as a corollary Theorem~\ref{thm:heltonvinnikov}.

\section{Notation and some Linear Algebra}\label{sec:basics}
\begin{itemize}
	\item If not stated otherwise, $k$ and $d$ will always denote arbitrary natural numbers, i.e. elements of $\N=\{0,1,2,\dots\}$.
	\item Whenever a sum consists of quadratic matrices and scalars, the latter are to be understood as multiples of the identity matrix of the appropriate size. For example if $A\in S_d$ and $T$ is an indeterminate, we write $\det(T-A)$ for $\det(TI_d-A)$.
\end{itemize}
Given a commutative ring $R$, (in what follows, often $R=\R[X]$)
\begin{itemize}
	\item we write $f'$ for the derivative of $f\in R[T]$ with respect to $T$, i.e. $f'=\frac{\partial f}{\partial T}$.
\end{itemize}
If $f\in\C[X,T]$, then
\begin{itemize}
	\item $\cV(f):=\{\, (x,t)\in\C^2 \mid f(x,t)=0\,\}$
\end{itemize}
Moreover denote
\begin{itemize}
	\item $\R[T]_{\leq d}$ the vector space of polynomials of degree at most $d$,
	\item $H_d$ the set of $T$-real rooted polynomials of degree $d$ in $T$ and
	\item $H_d^+$ the set of those $f\in H_d$ that are strictly real rooted and define a smooth curve \footnote{That means the partial derivatives of $\widetilde{f}$ do not both vanish in any point of $\cV(f)$, where $\widetilde{f}$ denotes the square free core of $f$, i.e. the product of all its distinct prime factors.} $\cV(f)$,
	\item $S_d$ the set of symmetric $d\times d$ matrices over $\R[X]$,
	\item $\ch_d:S_d\to\R[X,T]:A\mapsto\det(T-A)$
\end{itemize}
In order to deduce the Helton-Vinnikov Theorem we will need some understanding of the relation between the degree of the entries of a matrix in $S_d$ and the degree of the coefficients of its characteristic polynomial, which we will gain in Section~\ref{sec:refine}. To this end we are going to introduce some further degree restrictions to the above defined objects.
\begin{itemize}
	\item $\R[X,T]_{(k,d)}:=\{\, \sum_{i=0}^{d}a_iT^i \in\R[X,T] \mid a_i\in\R[X],\, \deg a_i\leq k(d-i)\,\}$
	\item $H_{k,d}:=H_d\cap\R[X,T]_{(k,d)}$
	\item $H_{k,d}^+:=H_d^+\cap\R[X,T]_{(k,d)}$
	\item $S_{k,d}:=\{\, (a_{ij})_{i,j}\in S_d \mid \deg a_{ij}\leq k\,\}$
	\item $\ch_{k,d}:=\ch_d\mid_{S_{k,d}}$
\end{itemize}
The relation between $\R[X,T]_{(k,d)}$ and $S_{k,d}$ will become clearer in Lemma~\ref{lem:proper} (1).

\begin{rem}\label{rem:infinity}
	We will view $\R$ embedded into the real projective line $\P^1(\R)=\R\cup \{\infty\}$ and $X^{-1}$ as a continuous function on $U:=\P^1(\R)\setminus \{0\}$. Accordingly we extend the notion of polynomials that are (strictly) $T$-real rooted on $M$ also to elements of $\R[X^{-1},T]$ and subsets $M$ of $U$ in the obvious manner. Again we will omit ``on $M$'' if $M=U$.
\end{rem}

\begin{dfn}\label{defi:unimodular}
	Let $R$ be a commutative ring and $M$ an $R$-module. We call a symmetric bilinear form $\be:M\times M\to R$
	\begin{enumerate}[(1)]
		\item \emph{regular}, if the induced map $M\to M^*$ is injective and
		\item \emph{unimodular}, if the induced map $M\to M^*$ is an isomorphism.
	\end{enumerate}
	where $M^*$ denotes the dual module of $M$.
\end{dfn}
\begin{rem}\label{rem:unimodular}
	If in the situation of the previous definition $R$ is an integral domain, $M$ is finitely generated and torsion free, then surjectivity of $M\to M^*$ already implies injectivity, which is clear, if $R$ is a field, and can otherwise be reduced to that case by localizing at the zero ideal.
\end{rem}

\begin{dfn}
	For a Noetherian integral domain $R$ with field of fractions $K$ a \emph{fractional $R$-ideal} is a finitely generated $R$-submodule of $K$. Denote $\cI(R)$ the set of nonzero fractional ideals of $R$. A \emph{principal $R$-ideal} is a fractional $R$-ideal of the form $(a):=(a)_R:=Ra$ for some $a\in K$.
\end{dfn}


\begin{dfn}\label{defi:hermite}
	Let $K$ be a field, $f\in K[T]$ monic and $L:=K[T]/(f)=K[\al]$, where $\al=\overline{T}+(f)$. The representing matrix of the symmetric bilinear form defined via the trace form of $L$ over $K$
	\begin{align*}
		L\times L &\to K\\
		(a,b)&\to Tr_{L/K}(ab)
	\end{align*}
	with respect to the basis $1,\al,\dots,\al^{d-1}$, is called the \emph{Hermite matrix of $f$}.
\end{dfn}
\begin{rem}\label{rem:trace}
	Assume we are in the situation of the previous definition.
	\begin{enumerate}[(1)]
		\item If $K$ is the field of fractions of some integral domain $R$ and $f\in R[T]$ then $\Tr_{L/K}(R[\al])\subseteq R$ and therefore the entries of the Hermite matrix of $f$ lie in $R$.
		\item If $f=\prod_{i=1}^d(T-\la_i)\in K[T]$ all $\la_i$ lie in $K$ and are pairwise distinct, then $L=K[T]/(f)$ is isomorphic to $K^d$ via the Vandermonde map
			\begin{align*}
				V:L&\to K^d\\
				g+(f)&\mapsto (g(\la_i))_{i}
			\end{align*}
			and for any $\overline{g}=g+(f)\in L$ the endomorphism $\mu_{\overline{g}}$ of $L$ that is given by multiplication by $\overline{g}$ becomes diagonal with respect to the standard basis of $K^d$, when identified with $L$ via $V$. More precisely $V\circ\mu_{\overline{g}}\circ V^{-1}$ is simply given by multiplication of the $i$-th entry of a vector in $K^d$ by $g(\la_i)$. The trace can therefore be written as $\Tr_{L/K}(\overline{g})=\sum_{i=1}^dg(\la_i)$. Note that this last equality also holds, if not all the roots of $f$ lie in $K$, because we can reduce it to that case by extending scalars to some field that contains all the roots.
	\end{enumerate}
\end{rem}

The content of this note is based in an essential manner on the following very classical result on the Hermite matrix.

\begin{lem}\label{lem:hermite}
	If $f\in\R[T]$ is strictly real rooted, then the Hermite matrix of $f$ is positive definite.
\end{lem}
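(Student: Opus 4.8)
The plan is to read off the trace form explicitly from Remark~\ref{rem:trace}(2) and recognize the Hermite matrix as a Gram matrix of the standard inner product. Write $f=\prod_{i=1}^d(T-\la_i)$ with $\la_1,\dots,\la_d\in\R$ pairwise distinct (this is exactly what strict real rootedness gives), and put $L=\R[T]/(f)=\R[\al]$. By Remark~\ref{rem:trace}(2) the Vandermonde map $V\colon L\to\R^d$, $g+(f)\mapsto(g(\la_i))_i$, is an $\R$-linear isomorphism, and $\Tr_{L/\R}(\overline g)=\sum_{i=1}^d g(\la_i)$ for every $\overline g=g+(f)\in L$. Here it is essential that the roots are real (so $V$ really maps into $\R^d$) and distinct (so $V$ is bijective).

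Applying the trace formula to a product, I get for all $\overline g,\overline h\in L$
\[
\Tr_{L/\R}(\overline g\,\overline h)=\sum_{i=1}^d g(\la_i)h(\la_i)=\langle V\overline g,\,V\overline h\rangle,
\]
where $\langle\,\cdot\,,\,\cdot\,\rangle$ denotes the standard scalar product on $\R^d$. Thus the trace form of $L$ over $\R$ is the pullback under the isomorphism $V$ of the (positive definite) standard scalar product; in particular it is itself positive definite, hence so is its Gram matrix with respect to any $\R$-basis of $L$ — and that is precisely the Hermite matrix $H$, being the Gram matrix with respect to $1,\al,\dots,\al^{d-1}$.

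Concretely, $V$ sends $\al^{\,j}$ to $(\la_i^{\,j})_i$, so the matrix of $V$ in the basis $1,\al,\dots,\al^{d-1}$ of $L$ and the standard basis of $\R^d$ is the Vandermonde matrix $M=(\la_i^{\,j})_{1\le i\le d,\ 0\le j\le d-1}$, and the displayed identity becomes $H=M^{\top}M$. Since the $\la_i$ are pairwise distinct, $\det M=\prod_{1\le i<j\le d}(\la_j-\la_i)\neq 0$, so $M$ is invertible and $H=M^{\top}M$ is positive definite. (Equivalently, $\Tr_{L/\R}(\overline g^{\,2})=\sum_i g(\la_i)^2\ge 0$ with equality only if $g$ vanishes at all $d$ distinct points $\la_i$, forcing $\overline g=0$.)

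There is no genuine obstacle in this argument; the only point that deserves care is that the hypothesis is used in full: real rootedness makes $V$ land in $\R^d$ and simplicity of the roots makes $V$ an isomorphism, which together turn the trace form into an honest sum of squared evaluations and hence into a positive definite form.
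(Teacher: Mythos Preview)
Your proof is correct and follows essentially the same approach as the paper: use Remark~\ref{rem:trace}(2) to write $\Tr_{L/\R}(\overline g^{\,2})=\sum_i g(\la_i)^2$, which is nonnegative and vanishes only when $\overline g=0$. You have simply elaborated this into the Gram-matrix identity $H=M^{\top}M$ via the Vandermonde matrix, which is a nice explicit rephrasing of the same argument.
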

\begin{proof}
	It follows immediately from the considerations in Remark \ref{rem:trace}. Let $f=\prod_{i=1}^d(T-\la_i)$ where the $\la_i\in\R$ are pairwise distinct. Then
	\[\Tr_{L/\R}(\overline{g}^2)=\sum_{i=1}^dg(\la_i)^2\geq 0\]
	for $L=\R[T]/(f)$ and $\overline{g}=g+(f)\in L$ and ``$=0$'' holds if and only if $\overline{g}=0$.
\end{proof}

\begin{ass}\label{ass:RSKL}
	Let $R$ be a principal ideal domain with field of fractions $K$, $f\in R[T]$ monic, irreducible and separable of degree $d$. As above we define $L:=K[T]/(f)=K[\al]$ and $S:=R[\al]$, where $\al:=T+(f)\in L$. Moreover denote $\Tr:=\Tr_{L/K}$ the trace form of $L$ over $K$. $1,\al,\dots,\al^{d-1}$ form a basis of the $R$-module $S$ and the $K$-vector space $L$, referred to as the \emph{standard basis}.
\end{ass}

\begin{lem}\label{lem:fractional_ideal}
	Let Assumption \ref{ass:RSKL} hold. Moreover assume $I\in\cI(S)$ and denote $\mu_{\al}:I\to I:x\mapsto \al x$ multiplication by $\al$, viewed as an endomorphism of the $R$-module $I$. Then
	\begin{enumerate}[(1)]
		\item $I$ is free of rank $d$ as an $R$-module and
		\item the characteristic polynomial of $\mu_{\al}$ is $f$.
	\end{enumerate}
\end{lem}
\begin{proof}
	(1) Since $I$ is finitely generated over $S$ and $S$ is finitely generated over $R$, also $I$ is finitely generated over $R$. And since $I$ is torsion free and $R$ is a principal ideal domain, it is also free. The rank is at most $d$, since $L$ has $K$-dimension $d$. The rank is at least $d$, since $S$ is of rank $d$ over $R$ and we can embed $S$ into $I$ by $a\mapsto ax$ for any nonzero $x\in I$.

	(2) Multiplication by $\al$ as an endomorphism of $L$ has $f$ as its characteristic polynomial, as can easily be checked, e.g. by looking at its representing matrix with respect to the standard basis, which is also called the \emph{companion matrix of $f$}. By (1) every $\R[X]$-basis of $I$ is an $\R(X)$-basis of $L$ and characteristic polynomials are basis-independent.
\end{proof}

The following somewhat technical lemma will be very useful in the construction of certain unimodular bilinear forms and thus an essential ingredient in the proof of Theorem~\ref{thm:realrooted}. The proof of (1) is sometimes attributed to Euler and the way it is written here it is basically taken from Lang's book \citep[Chapter V, Proposition 5.5]{Lang02}. We include it here for selfcontainedness.

\begin{lem}\label{lem:trace}
	Let Assumption~\ref{ass:RSKL} hold. Then
	\begin{enumerate}[(1)]
		\item The bilinear form
			\begin{align*}
				\si:S\times S&\to R\\
				(a,b)&\mapsto \Tr\left(\frac{ab}{f'(\al)}\right)
			\end{align*}
			is welldefined and unimodular
		\item If $I\in\cI(S)$ and $c\in L$ such that $I^2=\left(\frac{c}{f'(\al)}  \right)$ then also
			\begin{align*}
				\be:I\times I&\to R\\
				(a,b)&\mapsto \Tr\left(\frac{ab}{c}\right)
			\end{align*}
			is welldefined and unimodular
	\end{enumerate}
\end{lem}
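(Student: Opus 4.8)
The whole statement rests on the classical fact (the ``Euler'' part) that the $R$-dual of the order $S=R[\al]$ with respect to the trace pairing $\langle x,y\rangle:=\Tr(xy)$ on $L$ equals $\tfrac{1}{f'(\al)}S$. So the first thing I would prove is: setting $S^{\vee}:=\{\,x\in L\mid\Tr(xS)\subseteq R\,\}$, one has $S^{\vee}=\tfrac{1}{f'(\al)}S$. Since $f$ is monic and $f(\al)=0$, the polynomial $g(T):=f(T)/(T-\al)$ lies in $S[T]$; write $g(T)=\sum_{j=0}^{d-1}b_jT^j$ with $b_j\in S$ and $b_{d-1}=1$. I claim $\Tr\bigl(\tfrac{\al^ib_j}{f'(\al)}\bigr)=\de_{ij}$ for $0\le i,j\le d-1$. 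Over a splitting field of $f$, whose $d$ roots $\la_1,\dots,\la_d$ are pairwise distinct because $f$ is separable, this is exactly Lagrange interpolation: from $T^i=\sum_k\la_k^i\tfrac{f(T)}{(T-\la_k)f'(\la_k)}$ and $\tfrac{f(T)}{T-\la_k}=\sum_jb_j(\la_k)T^j$ one reads off $\sum_k\tfrac{\la_k^ib_j(\la_k)}{f'(\la_k)}=\de_{ij}$, which by Remark~\ref{rem:trace}(2) is $\Tr\bigl(\tfrac{\al^ib_j}{f'(\al)}\bigr)$; the general case follows by extending scalars as in that remark. Hence $\tfrac{b_0}{f'(\al)},\dots,\tfrac{b_{d-1}}{f'(\al)}$ is the $R$-basis of $S^{\vee}$ dual to $1,\al,\dots,\al^{d-1}$, and as the $b_j$ arise from the $\al^j$ by a unipotent triangular base change over $R$ (since $b_{d-1}=1$ and $b_{j-1}=\al b_j+c_j$, with $c_j$ the coefficient of $T^j$ in $f$), they $R$-span $S$; therefore $S^{\vee}=\tfrac{1}{f'(\al)}S$.

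Next I would record that, as $f$ is separable, the trace pairing is nondegenerate on $L$, so for any finitely generated $R$-submodule $\La$ of $L$ that is free of rank $d$ (hence spans $L$ over $K$) the pairing identifies $\La^{\vee}:=\{\,x\in L\mid\Tr(x\La)\subseteq R\,\}$ with the dual module $\La^{*}$ via $x\mapsto\langle x,\cdot\,\rangle$. Part (1) then follows at once: for $a,b\in S$ we have $ab/f'(\al)\in\tfrac{1}{f'(\al)}S=S^{\vee}$, so $\Tr(ab/f'(\al))\in R$ (since $1\in S$), giving welldefinedness; and under $S^{*}\cong S^{\vee}$ the map $S\to S^{*}$ induced by $\si$ is $a\mapsto a/f'(\al)$, i.e. multiplication by the unit $\tfrac{1}{f'(\al)}\in L^{\times}$, which carries $S$ isomorphically onto $\tfrac{1}{f'(\al)}S=S^{\vee}$. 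So $\si$ is unimodular. (Note that (1) is the case $I=S$, $c=f'(\al)$ of (2).)

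For part (2), by Lemma~\ref{lem:fractional_ideal}(1) $I$ is free of rank $d$ over $R$, so $I^{*}\cong I^{\vee}$ as above. Welldefinedness of $\be$: for $a,b\in I$ one has $ab\in I^2=\tfrac{c}{f'(\al)}S$, hence $ab/c\in\tfrac{1}{f'(\al)}S=S^{\vee}$ and $\Tr(ab/c)\in R$. For unimodularity I would compute $I^{\vee}$. Since $I$ is an $S$-module, $\Tr(xI)\subseteq R$ is equivalent to $\Tr(xIS)\subseteq R$, i.e. to $xI\subseteq S^{\vee}$; thus $I^{\vee}=(S^{\vee}:I)$, and using $S^{\vee}=\tfrac{1}{f'(\al)}S=\tfrac{1}{c}I^2$ this is $\tfrac{1}{c}(I^2:I)$. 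Now the key point: because $I^2=\tfrac{c}{f'(\al)}S$ is a principal, hence invertible, fractional $S$-ideal, $I$ itself is invertible — indeed $I\cdot\bigl(\tfrac{f'(\al)}{c}I\bigr)=\tfrac{f'(\al)}{c}I^2=S$. Invertibility of $I$ forces $(I^2:I)=I$ (multiply $xI\subseteq I^2$ by $I^{-1}$), so $I^{\vee}=\tfrac{1}{c}I$. Consequently the map $I\to I^{*}$ induced by $\be$ is, under $I^{*}\cong I^{\vee}$, multiplication by the unit $\tfrac1c\in L^{\times}$, carrying $I$ isomorphically onto $\tfrac1cI=I^{\vee}$; hence $\be$ is unimodular.

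The main obstacle is the identity $S^{\vee}=\tfrac{1}{f'(\al)}S$ of the first paragraph — getting the dual-basis computation and the reduction of the trace identity to the split case right, which is precisely where separability of $f$ enters. After that, both parts are essentially formal lattice bookkeeping, the one genuinely new ingredient being the implication ``$I^2$ principal $\Rightarrow$ $I$ invertible'', which collapses the colon ideal $(I^2:I)$ to $I$.
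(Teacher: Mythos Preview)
Your argument is correct. For part~(1) you are doing exactly what the paper does: the polynomial $g(T)=f(T)/(T-\al)=\sum_jb_jT^j$ is the paper's $b(X,Y)=\tfrac{f(X)-f(Y)}{X-Y}$ evaluated at $X=\al$, and the Lagrange-interpolation identity you derive is the same trace computation the paper carries out, yielding the dual basis $b_j/f'(\al)$ of $1,\al,\dots,\al^{d-1}$. Your additional remark that the $b_j$ are related to the $\al^j$ by a unipotent triangular change, hence $S^{\vee}=\tfrac{1}{f'(\al)}S$, is a tidy way of packaging the conclusion that the paper leaves implicit.

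For part~(2) your route is genuinely different and more conceptual. The paper proves surjectivity of $I\to I^{*}$ by hand: given $\nu\in I^{*}$ it writes $\nu=\si(r,\cdot)$ for some $r\in L$ via the dual basis from~(1), sets $a=rc/f'(\al)$, and then checks $a\in I$ in two steps, first showing $rI\subseteq S$ and then using the inclusion $(\tfrac{c}{f'(\al)})\subseteq I^2$ to write $\tfrac{c}{f'(\al)}=\sum x_iy_i$ with $x_i,y_i\in I$. You instead identify $I^{*}$ with the lattice $I^{\vee}=(S^{\vee}:I)=\tfrac{1}{c}(I^2:I)$ and observe that $I^2$ principal forces $I$ to be invertible, whence $(I^2:I)=I$ and $I^{\vee}=\tfrac{1}{c}I$, so $\be$ corresponds to the isomorphism $a\mapsto a/c$. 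This is shorter and makes transparent exactly where both inclusions $I^2\subseteq(\tfrac{c}{f'(\al)})$ and $I^2\supseteq(\tfrac{c}{f'(\al)})$ are used (the first for welldefinedness, both together for invertibility of $I$). The paper's approach has the virtue of being entirely explicit and basis-level, requiring no familiarity with colon ideals or invertible ideals; yours trades that for a cleaner structural picture. Both are valid; yours would likely be preferred by readers comfortable with fractional-ideal bookkeeping.
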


\begin{proof}
	(1) First of all notice that $f'(\al)\in L$ is nonzero, since $f$ is separable. Let $f=\prod_{i=1}^{d}(T-\la_i)$, where the $\la_i$ lie in some field extension of $K$. Then by Remark \ref{rem:trace} for $h\in K[T]$ we have
	\[\Tr \left( h(\al) \right)=\sum_{i}h(\la_i)\]
	
	Define $b:=\frac{f(X)-f(Y)}{X-Y}\in K[X,Y]$ and write $b=\sum_{k=0}^{d-1}\be_kY^k$, where the $\be_k\in K[X]$. Check that for $1\leq i,j\leq d$ we get
	\[b(\la_i,\la_j)=\de_{ij}f'(\la_i).\]
	And thus for $0\leq  \ell\leq d-1$ and $1\leq j\leq d$ we have
	\[\la_j^\ell=\sum_i \frac{\la_i^{\ell}}{f'(\la_i)}b(\la_i,\la_j)\]
	and since both sides of the equation are polynomial expressions in $\la_j$ of degree at most $d-1$ coinciding in the $d$ distinct points $\la_1,\dots,\la_d$ we therefore get
	\begin{align*}
		Y^{\ell}=\sum_{i=1}^d \frac{\la_i^{\ell}}{f'(\la_i)}b(\la_i,Y)=\sum_{i=1}^d\frac{\la_i^{\ell}}{f'(\la_i)}\sum_{k=0}^{d-1}\be_k(\la_i)&Y^k=\\
		=\sum_k \left( \sum_i \frac{\la_i^{\ell}\be_k(\la_i)}{f'(\la_i)} \right)Y^k=\sum_k \Tr\left(\frac{\al^{\ell}\be_k(\al)}{f'(\al)}  \right)&Y^k
	\end{align*}
	Comparing the coefficients of $Y^k$ we thus get for $0\leq k,\ell<d$
	\[\si(\al^{\ell},\be_k(\al))=\Tr\left( \frac{\al^{\ell}\be_k(\al)}{f'(\al)} \right)=\de_{\ell k}\]
	And as a special case
	\[\Tr\left( \frac{\al^{\ell}}{f'(\al)} \right)=\de_{\ell(d-1)}\]
	since $\be_{d-1}=1$.
	From this follows that if $a,b\in S$ then $\si(a,b)=\si(ab,1)$ is the coefficient of $\al^{d-1}$ in a representation of $ab\in S$ with respect to the standard basis and therefore lies in $R$. So $\si$ is welldefined, i.e. takes its values in $R$. We also see from the above, that $\si$ is unimodular, since $\be_0(\al),\dots,\be_{d-1}(\al)\in S$ form the dual basis of the standard basis with respect to $\si$.
	\medskip
	
	(2) Now fix an $R$-basis $e_1,\dots,e_d$ of $S$ and its dual basis $e_1^*,\dots, e_d^*\in S$, i.e. $\si(e_i,e_j^*)=\de_{ij}$, for example those from (1).

	We see easily that also $\be$ is welldefined: Since $I^2\subseteq(\frac{c}{f'(\al)})$ for any $a,b\in I$ there is $d\in S$ such that $ab=\frac{dc}{f'(\al)}$. But then
	\[\be(a,b)=\Tr\left(\frac{ab}{c}\right)=\Tr\left(\frac{d}{f'(\al)}\right)=\si(d,1)\in R.\]

	In order to check that $\be$ is unimodular we take an arbitrary module homomorphism $\nu:I\to R$ and show that $\nu=\be(a,\, .\,)$ for some $a\in I$, i.e. the induced homomorphism $I\to I^*$ is surjective and therefore bijective by Remark \ref{rem:unimodular}. We will view $\si$ and $\be$ naturally as bilinear forms $L\times L\to K$. We can also extend $\nu$ uniquely to a functional $L\to K$, since $I$ generates $L$ as a $K$-vector space, see Lemma~\ref{lem:fractional_ideal}. We can represent it in the usual manner via the dual basis in the following way:
	\begin{align*}
		\nu=&\si(r,.)\text{, where}\\
		r:=&\sum_{i=1}^d\nu(e_i)e_i^*
	\end{align*}
	We claim that $a:=\frac{r c}{f'(\al)}$ lies in $I$ and $\nu=\be(a,.)$. The latter is easy since
	\[\nu(b)=\si(r,b)=\Tr\left(\frac{r b}{f'(\al)}\right)=\Tr\left(\frac{r c}{f'(\al)}\frac{b}{c}  \right)=\Tr\left(\frac{ab}{c}\right)=\be(a,b)\]
	for all $b\in I$ as desired.
	
	In order to show that $a\in I$, we first notice that $rI\subseteq S$: For any $b\in I$ we can represent $rb$ again with the dual basis $e_1^*,\dots,e_d^*$
	\[rb=\sum_{i=1}^d\al_ie_i^*\]
	where the coefficients $\al_i$ lie in $R$ since
	\[\al_i=\si(rb,e_i)=\si(r,be_i)=\nu(be_i)\in R\]
	because $e_i\in S$ and therefore $be_i\in I$. Consequently $rb\in S$. Now we use the other inclusion $I^2\supseteq (\frac{c}{f'(\al)})$ to find $x_i,y_i\in I$ such that
	\[\frac{c}{f'(\al)}=\sum_i x_iy_i\] and therefore
	\[a=\frac{r c}{f'(\al)}=\sum_i\underbrace{r x_i}_{\in S}y_i\in I.\]
	\medskip
\end{proof}

Another elementary however not completely trivial but important ingredient is the following special feature of unimodular forms over univariate polynomial rings.
\begin{thm}\label{thm:diagonalization}
	Let $K$ be a field of characteristic not equal to $2$ and $M$ a free $K[X]$-module of rank $n$. Then any unimodular $K[X]$-bilinear form $\be$ on $M$ admits an orthogonal basis $q_1,\dots,q_n$. Moreover for every such orthogonal basis we have $\be(q_i,q_i)\in K^{\times}$.
\end{thm}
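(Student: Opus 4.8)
The plan is to prove the existence of an orthogonal basis by induction on the rank $n$, exploiting the fact that a unimodular form over $K[X]$ must represent a unit of $K[X]$, i.e.\ a nonzero element of $K$. First I would handle the base case and the core reduction step: given a free $K[X]$-module $M$ of rank $n$ with unimodular form $\be$, I claim there is a vector $q\in M$ with $\be(q,q)\in K^\times$. Indeed, since $\be$ is unimodular, for every coordinate functional $e_i^*$ with respect to some basis $e_1,\dots,e_n$ there is a vector $v_i$ with $\be(v_i,\,.\,)=e_i^*$; in particular the ideal of $K[X]$ generated by all values $\be(v,w)$, $v,w\in M$, is the unit ideal. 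Polarizing (using $\chr K\neq 2$), the values $\be(v,v)$ together with the $\be(v,w)$ generate the same ideal as the $\be(v,v)$ alone via $\be(v,w)=\tfrac12(\be(v+w,v+w)-\be(v,v)-\be(w,w))$, so the diagonal values $\be(v,v)$ for $v\in M$ generate the unit ideal of $K[X]$.

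The hard part will be passing from ``the diagonal values generate the unit ideal'' to ``some single diagonal value is a unit.'' This is where the structure of $K[X]$ as a Euclidean domain enters. I would argue as follows: among all $v\in M$ with $\be(v,v)\neq 0$ pick one, say $q_1$, for which the degree of the polynomial $\be(q_1,q_1)$ is minimal. Suppose $\deg\be(q_1,q_1)>0$. Using that $M=K[X]q_1\oplus q_1^\perp$ fails only up to the divisibility obstruction, I would instead directly do a division argument: for any $w\in M$, consider $\be(q_1+g\,w,\,q_1+g\,w)=\be(q_1,q_1)+2g\,\be(q_1,w)+g^2\be(w,w)$ for $g\in K[X]$, and more generally $\be(h\,q_1+g\,w,\,\cdot\,)$, and use minimality of $\deg\be(q_1,q_1)$ together with the fact that, if $\be(q_1,q_1)$ were a nonconstant polynomial with a root $x_0$ in an extension field, then unimodularity forces the form not to degenerate at $x_0$, producing some $w$ with $\be(q_1,w)(x_0)\neq 0$, which lets one lower the degree — a contradiction. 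Concretely, localizing or extending scalars to the residue field $K[X]/(p)$ at an irreducible factor $p\mid \be(q_1,q_1)$, the reduced form $\bar\be$ is again unimodular over a field, hence nondegenerate, so $\bar q_1$ is not in the radical of $\bar\be$; lift a vector pairing nontrivially with $q_1$ mod $p$ and run the Euclidean step to strictly decrease $\deg\be(q_1,q_1)$ or make it a unit. This yields $q_1$ with $\be(q_1,q_1)\in K^\times$.

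Once such a $q_1$ is found, the submodule $K[X]q_1$ is a unimodular direct summand: since $\be(q_1,q_1)$ is a unit, the map $M\to K[X]$, $v\mapsto \be(v,q_1)/\be(q_1,q_1)$, splits off $K[X]q_1$, so $M=K[X]q_1\oplus q_1^\perp$ with $q_1^\perp$ free (being a direct summand of a free module over the PID $K[X]$, hence projective hence free) of rank $n-1$, and $\be$ restricted to $q_1^\perp$ is again unimodular. By induction $q_1^\perp$ has an orthogonal basis $q_2,\dots,q_n$ with all $\be(q_i,q_i)\in K^\times$, and $q_1,\dots,q_n$ is the desired orthogonal basis of $M$.

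Finally, for the ``moreover'' part — that \emph{every} orthogonal basis $q_1,\dots,q_n$ has $\be(q_i,q_i)\in K^\times$ — I would compute the determinant of the Gram matrix, which is the diagonal matrix $\mathrm{diag}(\be(q_1,q_1),\dots,\be(q_n,q_n))$; unimodularity of $\be$ means this Gram matrix is invertible over $K[X]$, so its determinant $\prod_i \be(q_i,q_i)$ is a unit of $K[X]$, i.e.\ lies in $K^\times$. Since $K[X]$ is an integral domain, each factor $\be(q_i,q_i)$ is itself a unit, hence in $K^\times$. The main obstacle in the whole argument is the Euclidean descent establishing the existence of a vector with unit self-pairing; everything else is formal splitting and determinant bookkeeping.
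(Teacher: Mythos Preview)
The paper does not actually prove this theorem; it simply refers to Scharlau's book. So there is no detailed argument in the paper to compare against, and your proposal attempts more than the paper does. Your overall architecture is the standard one and is correct: find $q_1$ with $\be(q_1,q_1)\in K^\times$, split $M=K[X]q_1\oplus q_1^\perp$ orthogonally, and induct. Your treatment of the splitting step and of the ``moreover'' clause (via the unit Gram determinant forcing each diagonal factor to be a unit) is fine.

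The gap is the descent itself. You take $q_1$ minimizing $\deg\be(q_1,q_1)$, pick an irreducible $p\mid a:=\be(q_1,q_1)$, and, using nondegeneracy of the reduction mod $p$, produce $w$ with $p\nmid b:=\be(q_1,w)$. But then you write ``run the Euclidean step to strictly decrease $\deg\be(q_1,q_1)$'' without saying what that step is, and there is no evident candidate. For vectors $q_1+gw$ one has $\be(q_1+gw,q_1+gw)=a+2gb+g^2c$ with $c=\be(w,w)$; nothing you have arranged bounds $\deg b$ or $\deg c$, and the condition $p\nmid b$ is a congruence, not a degree bound. In rank two a genuine Euclidean descent \emph{does} go through, precisely because the $2\times2$ Gram determinant $ac-b^2$ is then a unit, which forces $2\deg b=\deg a+\deg c$ and lets one iterate division with remainder; for $n\ge3$ the $2\times2$ minors one would want to use are not units, and this mechanism breaks down. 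This is exactly the nontrivial content of the theorem. Correct proofs (as in the reference the paper cites) typically first split off hyperbolic planes $\langle 1,-1\rangle$ in the isotropic case and then handle the anisotropic case by a Harder--type argument, or else perform a more careful simultaneous degree reduction on the full Gram matrix rather than on a single vector; your sketch supplies neither. A smaller point: your assertion that $\bar q_1$ is not in the radical of $\bar\be$ amounts to $\bar q_1\neq0$, which is true but needs the remark that otherwise $q_1=pq_1'$ with $\deg\be(q_1',q_1')<\deg a$, contradicting minimality.
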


\begin{proof}
	See for example Scharlau's book \citep[Chapter 6, Theorem~3.3]{Scharlau11_book}.
\end{proof}

\section{Representations of Smooth Curves}\label{sec:smoothrealrooted}

In this section we will undertake the most important step in order to prove Theorem~\ref{thm:realrooted}. Namely we will first show that any strictly $T$-real rooted polynomial $f\in\R[X,T]$ is the characteristic polynomial of a symmetric matrix over $\R[X]$, if we assume, that the curve it defines is smooth. In fact, this assumption can be avoided, if one works with the normalization of the curve, instead of the curve itself. However, in order to keep the setup and language as elementary as possible, we are going to reduce the general case to this smooth case in the following two sections.

We now state two basic facts about Dedekind domains that we will need and which can be found in one way or the other or at least obtained from most books on algebraic geometry and commutative algebra. See for example \citep[Chapter I, Section 6]{Hartshorne77} and \citep[Section 11.4]{Eisenbud95}.

\begin{dfn}
	A \emph{Dedekind domain} is an integral domain $R$, that is one dimensional (i.e. nonzero prime ideals are maximal) and integrally closed in its field of fractions.
\end{dfn}

\begin{prop}\label{prop:smoothisdedekind}
	The coordinate ring of a smooth and irreducible affine algebraic curve is a Dedekind domain.
\end{prop}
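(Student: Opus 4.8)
The plan is to check the three defining properties of a Dedekind domain for the ring $R := \R[X,T]/(\widetilde f)$, where $\widetilde f$ denotes the squarefree core of the polynomial defining the curve; by irreducibility $\widetilde f$ is prime, so $R$ is an integral domain, and it is Noetherian by the Hilbert Basis Theorem since it is a finitely generated $\R$-algebra. Two of the three properties are then pure dimension theory. For one-dimensionality: $(\widetilde f)$ is a nonzero prime of $\R[X,T]$ that is not maximal (it is properly contained, e.g., in $(\widetilde f, X-a)$ for a suitable $a$), and $\R[X,T]$ has Krull dimension $2$, so $\dim R \le 1$; since $R$ is not a field, $\dim R = 1$, i.e. every nonzero prime of $R$ is maximal. (Equivalently, by Noether normalization the Krull dimension of a finitely generated domain over a field equals the transcendence degree of its fraction field, which here is $1$.)

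The only substantial point is that $R$ is integrally closed in its field of fractions, and this is where smoothness is used. A Noetherian integral domain $R$ equals $\bigcap_{\m}R_{\m}$ inside its fraction field, with the intersection over all maximal ideals $\m$, and integral closure commutes with localization; hence the integral closure of $R$ is $\bigcap_{\m}\overline{R_{\m}}$, so $R$ is integrally closed as soon as every $R_{\m}$ is. Now each $R_{\m}$ is one-dimensional, and a one-dimensional regular local ring is a discrete valuation ring, in particular integrally closed; so it suffices to show each $R_{\m}$ is regular. Passing to $\C$, where $R\otimes_{\R}\C$ is the coordinate ring of $\cV(\widetilde f)$ and its maximal ideals are exactly the points of the curve, the smoothness hypothesis says that the Jacobian $(\partial_X\widetilde f,\partial_T\widetilde f)$ does not vanish at any point of $\cV(\widetilde f)$, i.e. the Jacobian matrix has rank $1$, equal to the codimension of the curve in the plane; by the Jacobian criterion this forces the local ring of $R\otimes_{\R}\C$ at each such point to be regular. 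Regularity then descends to $R_{\m}$ along the faithfully flat ring map $R_{\m}\to R\otimes_{\R}\C$, so $R$ is normal, completing the proof.

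The main obstacle is precisely this last step: the Jacobian criterion, translating the geometric smoothness condition (nonvanishing partials at all $\C$-points) into regularity of the local rings. Everything else — Noetherianity, the dimension count, the reduction of normality to the local rings, and the fact that a $1$-dimensional regular local ring is a DVR — is formal. Since the proposition is quoted here as a standard fact, in the actual write-up I would simply cite the Jacobian criterion from one of the references already in use (\citep[Section 11.4]{Eisenbud95} or \citep[Chapter I]{Hartshorne77}) rather than reprove it, and assemble the three ingredients as above.
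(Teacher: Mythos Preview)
Your argument is correct and is precisely the standard route: Noetherianity from Hilbert's Basis Theorem, one-dimensionality from the Krull dimension of the ambient polynomial ring, and normality from the Jacobian criterion giving regularity of the local rings, hence DVRs. One cosmetic point: the faithfully flat map you want for the descent of regularity is $R_{\m}\to R_{\m}\otimes_{\R}\C$ (not $R\otimes_{\R}\C$), but the intent is clear and the conclusion is right.

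There is nothing to compare against, however: the paper does not prove this proposition at all. It is listed among the ``basic facts about Dedekind domains'' that are simply quoted from the literature, with a pointer to \citep[Chapter I, Section 6]{Hartshorne77} and \citep[Section 11.4]{Eisenbud95}. Your final paragraph already anticipates this --- citing the Jacobian criterion from exactly those references and assembling the three ingredients is entirely in keeping with how the paper treats the statement.
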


\begin{prop}\label{prop:idealfactorization}
	If $R$ is a Dedekind domain, then the set of nonzero fractional $R$-ideals $\cI(R)$ forms an abelian group via the usual ideal multiplication and it is freely generated by its nonzero prime ideals, i.e. every fractional ideal is admits a unique factorization into prime ideals.
\end{prop}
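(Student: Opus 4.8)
The plan is to establish the three claims in sequence, with the group structure being the substantive one. Throughout, $K$ denotes the fraction field and, for $I\in\cI(R)$, I write $I^{-1}:=(R:I)=\{\,x\in K\mid xI\subseteq R\,\}$; this is again a nonzero fractional ideal (it is finitely generated, being an $R$-submodule of $c^{-1}R$ for any $0\neq c\in I$, and $R$ is Noetherian), and one always has $I\cdot I^{-1}\subseteq R$. Since every nonzero fractional ideal has the form $\tfrac1d J$ with $d\in R\setminus\{0\}$ and $J\subseteq R$ an integral ideal, it suffices to prove $I\cdot I^{-1}=R$ for nonzero \emph{integral} ideals $I$ in order to conclude that $\cI(R)$ is a group. (Here and below we use that $R$ is Noetherian, which is automatic in the situation of interest, $R$ being a coordinate ring.)

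First I would record the auxiliary fact, valid in any Noetherian domain, that every nonzero ideal contains a product of nonzero prime ideals: otherwise a maximal counterexample $I$ is neither $R$ nor prime, so $ab\in I$ with $a,b\notin I$; then $I+(a)$ and $I+(b)$ properly contain $I$, hence each contains a product of primes, whose product lies in $(I+(a))(I+(b))\subseteq I$ --- a contradiction.

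The key step is to handle a maximal ideal $\m$: show $\m^{-1}\supsetneq R$ and then $\m\m^{-1}=R$. For the first, take $0\neq a\in\m$ and a product $\p_1\cdots\p_r\subseteq(a)$ with $r$ minimal; $\m$ contains this product, so $\m\supseteq\p_i$ for some $i$, say $i=1$, and one-dimensionality makes $\p_1$ maximal, hence $\p_1=\m$; minimality of $r$ yields $b\in\p_2\cdots\p_r\setminus(a)$ with $b\m\subseteq(a)$, so $b/a\in\m^{-1}\setminus R$ (the case $r=1$ being immediate, since then $(a)=\m$). For the second, $\m\subseteq\m\m^{-1}\subseteq R$ and maximality of $\m$ force $\m\m^{-1}\in\{\m,R\}$; if $\m\m^{-1}=\m$ then every $x\in\m^{-1}$ stabilizes the finitely generated faithful $R$-module $\m$, hence is integral over $R$ by the determinant trick (Cayley--Hamilton), hence lies in $R$ because $R$ is integrally closed --- contradicting $\m^{-1}\supsetneq R$. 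Thus $\m\m^{-1}=R$. A Noetherian induction now upgrades invertibility to all nonzero integral ideals $I$: picking a maximal $\m\supseteq I$, one has $I\subseteq I\m^{-1}\subseteq\m\m^{-1}=R$ and $I\m^{-1}\neq I$ by the same determinant-trick argument, so $I\m^{-1}$ is invertible by the inductive hypothesis and therefore so is $I$. Hence $\cI(R)$ is an abelian group.

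Existence of a prime factorization of a nonzero integral ideal $I$ is once more Noetherian induction: a maximal counterexample $I\neq R$ lies in some maximal $\m$, the ideal $I\m^{-1}$ properly contains $I$ and so factors as $\p_1\cdots\p_s$, giving $I=\m\p_1\cdots\p_s$ --- a contradiction; a general nonzero fractional ideal $\tfrac1d J$ then factors as $(\prod\p_i)(\prod\q_j)^{-1}$ where $J=\prod\p_i$ and $(d)=\prod\q_j$. Finally, freeness (uniqueness) reduces to showing that an equality $\prod\p_i^{a_i}=\prod\q_j^{b_j}$ with the $\p_i$ distinct primes, the $\q_j$ distinct primes, all exponents positive, and $\{\p_i\}\cap\{\q_j\}=\emptyset$, forces both sides to be the empty product $R$: otherwise $\prod\q_j^{b_j}\subseteq\p_1$, so $\p_1$ contains some $\q_j$, whence $\q_j=\p_1$ by maximality, against disjointness. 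The main obstacle is precisely the pair $\m^{-1}\supsetneq R$ and $\m\m^{-1}=R$, as this is where all three hypotheses defining a Dedekind domain --- Noetherian, dimension one, integrally closed --- are genuinely used; the remaining steps are formal manipulation and Noetherian induction.
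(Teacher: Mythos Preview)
Your proof is correct and is the standard textbook argument (essentially the one in Neukirch, Marcus, or Lang's \emph{Algebraic Number Theory}). Note, however, that the paper does not actually prove this proposition: it is stated as one of two ``basic facts about Dedekind domains'' and the reader is referred to \cite[Chapter~I, Section~6]{Hartshorne77} and \cite[Section~11.4]{Eisenbud95}. So there is no ``paper's own proof'' to compare against; you have simply supplied what the paper outsources.

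One remark worth keeping: the paper's Definition of a Dedekind domain omits the Noetherian hypothesis, and your parenthetical observation that Noetherianity is automatic in the intended application (coordinate rings of affine curves, via Proposition~\ref{prop:smoothisdedekind}) is exactly the right patch. Without it the Noetherian-induction steps and the ``contains a product of primes'' lemma would not go through as stated.
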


There is one classical and standard but nontrivial result about class groups of smooth curves over the complex numbers, that we are going to use without proof namely Theorem~\ref{thm:divisible}. However there don't seem to exist many explicit references for it.
\begin{thm}\label{thm:divisible}
	Let $S:=\C[\cC]$ be the coordinate ring of a smooth and irreducible affine curve $\cC$ over $\C$. Then the \emph{ideal class group} $\Cl(S)$ of $S$, i.e. $\cI(S)$ modulo the subgroup of principal ideals, is divisible. In other words for any nonzero fractional $S$-ideal $J_0$ and natural number $n\in\N_{>0}$, there exists another fractional $S$-ideal $J_1$ and an element $e$ of the function field, such that $J_0=eJ_1^n$.
\end{thm}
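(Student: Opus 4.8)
The plan is to identify $\Cl(S)$ with the divisor class group of the smooth projective model of $\cC$ taken modulo the points at infinity, and then to deduce divisibility from the classical fact that the degree-zero part of the divisor class group of a smooth projective curve is the group of $\C$-points of its Jacobian, hence a complex torus, and thus manifestly divisible.

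Concretely, $S$ is a Dedekind domain by Proposition~\ref{prop:smoothisdedekind}, and unique factorization of fractional ideals (Proposition~\ref{prop:idealfactorization}) identifies $\cI(S)$ with the group $\Div(\cC)$ of divisors on $\cC$ (finite $\Z$-combinations of closed points) in such a way that $\Cl(S)$ becomes $\Div(\cC)$ modulo principal divisors. Let $\overline{\cC}$ be the smooth projective curve with function field $\C(\cC)$, and let $P_1,\dots,P_r$ be the finitely many points of $\overline{\cC}$ not lying on $\cC$; since $\cC$ is affine, $r\geq 1$. Restriction of divisors gives a surjection $\Div(\overline{\cC})\to\Div(\cC)$ with kernel $\bigoplus_{i=1}^r\Z P_i$, and it carries principal divisors onto principal divisors (the divisor of a rational function on $\overline{\cC}$ restricts to its divisor on $\cC$), so passing to classes yields the exact sequence
\[\bigoplus_{i=1}^r\Z P_i\longrightarrow\Cl(\overline{\cC})\longrightarrow\Cl(S)\longrightarrow 0,\]
the localization sequence for divisor class groups, cf.\ \citep[Chapter II, Proposition~6.5]{Hartshorne77}. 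The degree map $\deg\colon\Cl(\overline{\cC})\to\Z$ is a well-defined surjection (principal divisors have degree $0$, and any single point has degree $1$ over $\C$), and each $P_i$ has degree $1$. Hence, given a class in $\Cl(S)$, one lifts it to some $D\in\Cl(\overline{\cC})$ and replaces $D$ by $D-(\deg D)P_1$, which has degree $0$ and the same image in $\Cl(S)$ since $P_1$ maps to $0$ there; so the composite $\Cl^0(\overline{\cC})\hookrightarrow\Cl(\overline{\cC})\twoheadrightarrow\Cl(S)$ is surjective and it suffices to prove that $\Cl^0(\overline{\cC})$ is divisible.

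Now $\Cl^0(\overline{\cC})$ is the group of $\C$-points of the Jacobian of $\overline{\cC}$, an abelian variety of dimension equal to the genus $g$; analytically it is the complex torus $\C^g/\Lambda$ for a lattice $\Lambda\subset\C^g$, and such a torus is divisible by every $n\in\N_{>0}$. (Equivalently, over an algebraically closed field the multiplication-by-$n$ morphism on an abelian variety has finite kernel, hence, being a morphism of a complete irreducible variety into itself whose image is closed and of full dimension, is surjective on points.) Since a quotient of a divisible abelian group is divisible, $\Cl(S)$ is divisible. Unwinding the definition, for $J_0\in\cI(S)$ and $n\in\N_{>0}$ there is $J_1\in\cI(S)$ with $[J_1]^n=[J_0]$ in $\Cl(S)$, i.e.\ $J_0=eJ_1^n$ for a suitable $e$ in the function field, which is the asserted statement.

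The single non-elementary ingredient — and the step I would flag as the real obstacle to a fully self-contained account — is the structure of $\Cl^0(\overline{\cC})$, i.e.\ the existence of the Jacobian as an abelian variety (equivalently the Abel--Jacobi description of degree-zero divisor classes on a compact Riemann surface); everything else is formal bookkeeping with divisor class groups. This is exactly why Theorem~\ref{thm:divisible} is quoted rather than proved below. Over an arbitrary real closed base field one would invoke the purely algebraic form of the input: the Jacobian exists over any algebraically closed field, and in characteristic $0$ the isogeny $[n]$ is even étale of degree $n^{2g}$, in particular surjective.
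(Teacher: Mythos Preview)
Your argument is correct and follows essentially the same route as the paper's own discussion: identify $\Cl(S)$ with $\Pic(\cC)$, realize the latter as a quotient of $\Pic^0(\widehat{\cC})$ via the points at infinity, and then invoke the Jacobian (as a complex torus, or algebraically as an abelian variety on which $[n]$ is surjective) to obtain divisibility. The paper does not give a formal proof of Theorem~\ref{thm:divisible} but only an explanatory sketch with references; your write-up is a slightly more careful rendition of that same sketch, in particular making the surjection $\Cl^0(\overline{\cC})\twoheadrightarrow\Cl(S)$ explicit via the localization sequence and the shift by $(\deg D)P_1$.
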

Since it is the central result needed for our proof of Theorem~\ref{thm:smoothrealrooted}, let us give at least some explanation of this fact: It is folklore that there is an isomorphism between the group of divisors $\Div(\cC)$ (i.e. formal linear combinations of points on $\cC$) and the group of fractional ideals $\cI(S)$ of $S$ given by
\begin{align*}
	\Div(\cC)&\to \cI(S)\\
	\sum_i \nu_i p_i&\mapsto \prod_i I(p_i)^{-\nu_i}
\end{align*}
where the $p_i\in\cC$, $\nu_i\in\Z$ and for $p\in\cC$ the vanishing ideal $I(p)$ of $p$, i.e. the kernel of the evaluation map $S\to\C$ at $p$. This isomorphism induces an isomorphism on the respective class groups, i.e. $\Pic(\cC)$ the Picard group (or divisor class group, i.e. $\Div(\cC)$ modulo the subgroup of principal divisors) of the curve and the ideal class group $\Cl(S)$ of its coordinate ring.

If $\widehat{\cC}$ is the associated complete curve, i.e. for example (the normalization of) the projective closure of $\cC$ or simply any complete smooth curve having $\C(\cC)$ as its function field, then $\Pic(\cC)$ is the quotient of $\Pic^0(\widehat{\cC})$ (the degree $0$ part of $\Pic(\widehat{\cC})$) by the subgroup generated by the points at infinity, i.e. by $\widehat{\cC}\setminus \cC$.

Now $\Pic^0(\widehat{\cC})$ is a classical and widely studied object. The Abel-Jacobi Theorem states that it is isomorphic to the Jacobian Variety of $\widehat{\cC}$ which again can be viewed as a complex torus, more precisely the additive group $\C^g/\La$, where $g$ is the genus of $\widehat{\cC}$ and $\La$ is a sublattice of $\C^g$ of rank $2g$. From this theorem the divisibility follows immediately, since $\C^g$ is a divisible group. This however involves nonalgebraic methods, namely the Abel-Jacobi map, i.e. the isomorphism that identifies the degree zero component of the Picard group with the Jacobian variety.

A completely algebraic treatment of this topic has been done by Weil in \citep{weil46}. There the Jacobian of a complete smooth curve is defined as a certain \emph{abelian variety}, a complete group variety, attached to the curve. It is shown that $\Pic^0(\widehat{\cC})$ is isomorphic to the Jacobian, also in this algebraic setup. See \citep[Chapitre V, Théorème 19]{weil46}. Multiplication by some positive natural number $n$ in such groups is dominant and since they are complete, it is even surjective. That means the group is $n$-divisible.

See also \citep{Milne08} or other books on abelian varieties (e.g. by Mumford or Lang) for more details on this topic. A direct proof of the divisibility of $\Pic^0(\widehat{\cC})$, that does not use the construction of the Jacobian can be found in \citep{Frey79}.

\medskip
Using these facts, we can now proceed with the central step in our endeavor.

\begin{thm}\label{thm:smoothrealrooted}
	$H_d^+\subseteq \im \ch_d$. In other words every strictly $T$-real rooted polynomial $f\in\R[X,T]$ that defines a smooth curve $\cV(f)$ admits a symmetric $T$-spectral determinantal representation.
\end{thm}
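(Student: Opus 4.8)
The plan is to realize the given $f \in H_d^+$ as the characteristic polynomial of multiplication by $\al$ on a suitable fractional ideal, equipped with a unimodular trace pairing, and then to use Theorem~\ref{thm:diagonalization} to convert that pairing into an orthonormal frame, which will turn the multiplication endomorphism into a symmetric matrix. First I would reduce to the irreducible case: since $f$ is strictly $T$-real rooted and defines a smooth curve, its factorization over $\R[X,T]$ into irreducibles $f = f_1 \cdots f_r$ is into distinct factors, and a block-diagonal symmetric representation of $f$ can be assembled from symmetric representations of the $f_i$, provided each $f_i$ is itself strictly $T$-real rooted (inherited, since the roots of $f(x,T)$ are simple and split among the $f_i(x,T)$) and smooth (inherited, since an irreducible component of a smooth curve is smooth). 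So assume $f$ is irreducible; it is then separable over $K := \R(X)$ because it is squarefree, and monic of degree $d$ in $T$. Set $R := \R[X]$, a principal ideal domain, and take $S := R[\al] = R[T]/(f)$ inside $L := K[\al]$; Assumption~\ref{ass:RSKL} holds.

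The heart of the argument is to produce a fractional $S$-ideal $I$ and an element $c \in L$ with $I^2 = \bigl(\tfrac{c}{f'(\al)}\bigr)$ \emph{and} such that the resulting unimodular form $\be(a,b) = \Tr(ab/c)$ from Lemma~\ref{lem:trace}(2) is, in an appropriate sense, positive — so that after diagonalizing by Theorem~\ref{thm:diagonalization} the diagonal entries $\be(q_i,q_i) \in \R^\times$ are all \emph{positive}, i.e. we can rescale to an \emph{orthonormal} basis over $R$. The existence of such an $I$ is where Theorem~\ref{thm:divisible} enters. Complexifying, let $\widehat{S} := \C[\cC]$ be the coordinate ring of the (smooth irreducible) complex curve $\cC := \cV(f) \subseteq \C^2$; then $\widehat{S} = S \otimes_\R \C$. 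The different ideal of $S$ over $R$ is the principal ideal $(f'(\al))$, so the inverse different $\bigl(\tfrac{1}{f'(\al)}\bigr)$ is the dual of $S$ under $\si$; I want to extract a ``square root'' of it up to a principal factor. By Theorem~\ref{thm:divisible} applied with $J_0$ the class of the inverse different (over $\widehat{S}$) and $n = 2$, there is a fractional $\widehat{S}$-ideal $\widehat{J}$ and $e \in \C(\cC)$ with $\bigl(\tfrac{1}{f'(\al)}\bigr) = e\,\widehat{J}^{2}$. The remaining work is a descent/reality argument: one must choose $\widehat{J}$ to be defined over $\R$ (i.e. stable under complex conjugation, coming from a genuine fractional $S$-ideal $I$), and one must arrange the sign of $c$ (the generator associated to $e$) so that $\be$ is positive definite after base change to the real points. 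Here Lemma~\ref{lem:hermite} is the model case ($I = S$, $c = f'(\al)$): the form $\si$ itself is (a twist of) the Hermite form, positive semidefinite with the right signature because $f$ is strictly real rooted; the general $\be$ should inherit positivity because $c/f'(\al)$ is a square in the function field, so $\Tr(ab/c) = \Tr\bigl((a/\sqrt{I})(b/\sqrt{I})\bigr)$ up to the same sum-of-squares mechanism as in Remark~\ref{rem:trace}(2), evaluated at the real points $x \in \R$ where $f(x,T)$ has $d$ distinct real roots.

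Once $\be$ is unimodular \emph{and} positive definite in this sense, Theorem~\ref{thm:diagonalization} gives an orthogonal $R$-basis $q_1,\dots,q_d$ of $I$ with each $\be(q_i,q_i) \in \R_{>0}$; dividing $q_i$ by $\sqrt{\be(q_i,q_i)} \in \R^\times$ yields an orthonormal $R$-basis, i.e. the Gram matrix of $\be$ is $I_d$. Now consider $\mu_\al : I \to I$, $x \mapsto \al x$. It is self-adjoint for $\be$ because $\be(\al a, b) = \Tr(\al ab/c) = \be(a, \al b)$, so its matrix $A \in S_d$ with respect to an orthonormal basis is symmetric. By Lemma~\ref{lem:fractional_ideal}(2) the characteristic polynomial of $\mu_\al$ is exactly $f$, so $f = \det(T - A) = \ch_d(A)$, as desired; reassembling blocks over the factors $f_i$ finishes the general (reducible) case. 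I expect the main obstacle to be the descent and sign bookkeeping in the middle paragraph — showing the ``square root'' ideal $\widehat{J}$ from Theorem~\ref{thm:divisible} can be taken conjugation-stable and that the accompanying generator $c$ can be chosen with the correct sign at every real point to make $\be$ positive definite rather than merely regular of the wrong signature; everything else is bookkeeping with the tools already assembled (Lemmas~\ref{lem:hermite}, \ref{lem:fractional_ideal}, \ref{lem:trace} and Theorem~\ref{thm:diagonalization}).
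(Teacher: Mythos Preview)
Your overall architecture matches the paper's: reduce to irreducible $f$, find a fractional $S$-ideal $I$ and $c\in L$ with $I^2=(c/f'(\al))$, use Lemma~\ref{lem:trace}(2) to get a unimodular $\be$, diagonalize via Theorem~\ref{thm:diagonalization}, and show the diagonal entries are positive so that an orthonormal basis exists. The gap is precisely where you flag it, and it is a genuine missing idea, not just bookkeeping. Applying Theorem~\ref{thm:divisible} to the whole inverse different and then hoping to make the resulting $\widehat J$ conjugation-stable does not work in general: not every complex ideal class has a real representative, and there is no mechanism for ``choosing the sign of $c$'' pointwise along the real locus. The paper's trick resolves both the descent and the positivity in one stroke. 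Strict $T$-real rootedness says $f'$ has no zeros on the real points of $\cV(f)$, so in the prime factorization of $(f'(\al))_{S_\C}$ no real primes occur and the nonreal ones pair up under conjugation: $(f'(\al))_{S_\C}=\overline{J_0}\,J_0$. Now apply divisibility to $J_0$ alone, $J_0=eJ_1^2$, so that $(f'(\al))_{S_\C}=\overline{e}e\,(\overline{J_1}J_1)^2$. Then $c:=\overline{e}e=e_1^2+e_2^2$ is automatically a sum of two squares in $L$, and $\overline{J_1}J_1$ is automatically conjugation-stable, so $(c/f'(\al))_S$ is a square in $\cI(S)$ after a short check on prime exponents. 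The positivity of $\be(q,q)$ then follows because $1/c$ is again a sum of two squares, so $\Tr(q^2/c)=v_1^\intercal H v_1+v_2^\intercal H v_2$ with $H$ the Hermite matrix, which is positive definite by Lemma~\ref{lem:hermite}. In short: the hypothesis ``strictly real rooted'' is used \emph{in the construction of $I$} (no real primes in the different), not only in the positivity step; your proposal never invokes it there, which is why the descent remains opaque.

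One smaller point: for $S_\C$ to be a Dedekind domain you need $f$ irreducible over $\C$, not just over $\R$. The paper checks this using real-rootedness (a product $\overline g\,g$ would force the real locus of $\cV(f)$ to be finite); you should include that verification before invoking Theorem~\ref{thm:divisible}.
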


\begin{proof}
	We are going to find a suitable free $\R[X]$-module $I$ of rank $d$ and an endomorphism $\mu$ of $I$ with $f$ as its characteristic polynomial. Moreover, we will equip this module with a bilinear form $\be$ with the following two properties:
	\begin{itemize}
		\item $\mu$ is selfadjoint with respect to $\be$ and
		\item $\be$ admits an orthonormal basis.
	\end{itemize}
	We can then take $A$ to be the representing matrix of $\mu$ with respect to this orthonormal basis, which is symmetric and whose characteristic polynomial is also $f$.

	\medskip
	First of all, we can assume, that $f$ is irreducible, otherwise we work with each irreducible factor separately and form a block diagonal matrix composed of all their spectral determinantal representations, yielding a spectral determinantal representation of $f$.

	\medskip
	Define the $\R[X]$-algebra $S:=\R[X,T]/(f)$, the real coordinate ring of $\cV(f)$ and $L$ its function field, i.e. the field of fractions of $S$. Write $\Tr$ for the trace form $\Tr_{L/\R(X)}$ of $L$ over $\R(X)$. Moreover let $S_{\C}:=\C[X,T]/(f)$ be the complex coordinate ring with field of fractions $L_{\C}$. Note that $f$ is indeed irreducible over $\C$:
	
	Otherwise it would have to factor into pairs of complex conjugate irreducible monic polynomials $f=\prod_{i=1}^{n}\overline{g_i}g_i$. Since $\overline{g_i}g_i\in\R[X,T]$ and $f$ is irreducible over $\R$, we have $n=1$,  But then the set of real points of $\cV(f)$ would be the intersection of $\cV(g_1)$ and $\cV(\overline{g_1})$, which is finite, contradicting our main assumption on $f$.

	\medskip
	The central step in this proof will be to show that there exists a fractional $S$-ideal $I\in\cI(S)$ and a nonzero sum of squares $c\in L$ such that $I^2=\left( \frac{c}{f'(\al)} \right)$, where
	\[\al:=T+(f)\in S.\]
	Then we can define the unimodular bilinear form
	\begin{align*}
		\be:I\times I&\to \R[X]\\
		(a,b)&\mapsto \Tr\left(\frac{ab}{c}\right)
	\end{align*}
	as in Lemma~\ref{lem:trace}. By Lemma~\ref{lem:fractional_ideal} $I$ is a free $\R[X]$-module of rank $d$ and the endomorphism $\mu:I\to I:x\mapsto \al x$ has $f$ as its characteristic polynomial. Obviously $\mu$ is self-adjoint with respect to $\be$, because for $a,b\in I$
	\[\be(\mu a, b)=\Tr\left(\frac{(\al a) b}{c}\right)=\Tr\left( \frac{a(\al b)}{c} \right)=\be(a,\mu b)\]

	We can conclude from Theorem~\ref{thm:diagonalization} that $\be$ admits an orthogonal basis. If it even admits an \textbf{orthonormal} basis, we can take $A\in\R[X]^{d\times d}$ as announced to be the representing matrix of $\mu$ with respect to this orthonormal basis. Then $A$ is symmetric and
	\[f=\det(T-A)\]
	as desired. What is now left to show is
	\begin{enumerate}[(1)]
		\item how to obtain this fractional ideal and
		\item that we can find an orthonormal basis for $\be$.
	\end{enumerate}

	\medskip
	(1) Since $\cV(f)$ is smooth, $S_{\C}$ and $S$ are Dedekind domains by Proposition \ref{prop:smoothisdedekind}. We look at the prime ideal decomposition of $\left( f'(\al) \right)$ in $\cI(S_{\C})$ (see Proposition \ref{prop:idealfactorization}). So let
	\[\left(f'(\al)  \right)_{S_{\C}}=\prod_{i=1}^sP_i\prod_{j=1}^t \overline{Q_j} Q_j,\]
	where the $P_i$ are the vanishing ideals of real points and the $Q_j$ those of nonreal points on $\cV(f)$. Note that the nonreal points come in conjugate pairs, since $f'(\al)$ is defined over $\R$, i.e. lies in $S$. Since by assumption for all $x\in \R$ the roots of $f(x,T)$ are all simple, there is no real point $p\in\cV(f)$ such that $f'(p)=0$ and therefore $s$ must be $0$. Taking $J_0:=\prod_{j=1}^t Q_j$ we have
	\[\left( f'(\al) \right)_{S_{\C}}=\overline{J_0}J_0.\]
	Now we use the fact that the class group $\Cl(S_{\C})$ is divisible by Theorem~\ref{thm:divisible}, so there exist $e\in L_{\C}$ and a fractional $S_{\C}$-ideal $J_1$ such that $J_0=eJ_1^2$. Therefore
	\[\left( f'(\al) \right)_{S_{\C}}=\overline{J_0}J_0=\overline{eJ_1^2}eJ_1^2=\overline{e}e(\overline{J_1}J_1)^2.\]
	If $e=e_1 +ie_2$, where $e_1,e_2\in L$, then $c:=\overline{e}e=e_1^2+e_2^2$ is a sum of squares in $L$. So we get
	\[\left( \frac{c}{f'(\al)} \right)_{S_{\C}}=\left( \overline{J_1}J_1 \right)^{-2}\]
	is a square in $\cI(S_{\C})$. But then also $\left( \frac{c}{f'(\al)} \right)_S$ is a square in $\cI(S)$ \footnote{One might be tempted to say it is clear that $\left( \frac{c}{f'(\al)} \right)_S=\left( \overline{J_1}J_1\cap L \right)^{-2}$, however the map $\cI(S_{\C})\to\cI(S):I\mapsto I\cap L$ is \textbf{not} multiplicative, and upon a closer look the statement seems to be not completely obvious if one is not familiar with extensions of Dedekind domains and some geometry of curves.}: We look at the prime ideal decomposition of $\left( \frac{c}{f'(\al)} \right)_S$ in $\cI(S)$. So let
	\[\left( \frac{c}{f'(\al)} \right)_S=\prod_iP_i^{\nu_i}\prod_jQ_j^{\mu_j}\]
	where again the $P_i$ correspond to pairwise distinct real points and the $Q_j$ to pairwise distinct conjugate pairs of nonreal points. Then $S_{\C}P_i$ is prime and $S_{\C}Q_j=\overline{Q_j'}Q_j'$ for prime ideals $Q_j'$ in $S_{\C}$. Moreover the map
	\begin{align*}
		\cI(S)&\to\cI(S_{\C})\\
		I&\mapsto S_{\C}I
	\end{align*}
	is obviously multiplicative and therefore a group homomorphism. Combining these two observations yields
	\[\left( \frac{c}{f'(\al)} \right)_{S_{\C}}=\prod_i\left( S_{\C}P_i \right)^{\nu_i}\prod_j\left( \overline{Q_j'} \right)^{\mu_j}\prod_j\left( Q_j' \right)^{\mu_j}\]
	is the prime ideal decomposition of $\left( \frac{c}{f'(\al)} \right)_{S_{\C}}$ in $\cI(S_{\C})$. Since it is a square therein and the $S_{\C}P_i$, $\overline{Q_j'}$ and $Q_j'$ are all pairwise distinct all $\nu_i$ and $\mu_j$ must be even. That means $\left( \frac{c}{f'(\al)} \right)_S$ is a square in $\cI(S)$.

	\medskip
	(2) Now that we showed the existence of the fractional ideal $I$ as desired, we can define the unimodular bilinear form $\be$ as above. In order to see that we can find an orthonormal basis, we take an orthogonal basis $q_1,\dots,q_d\in I$ where $\be(q_i,q_i)\in\R^{\times}$ as in Theorem~\ref{thm:diagonalization}. We are going to use Lemma~\ref{lem:hermite} to see that these numbers are positive. So we fix $i$ and set $q:=q_i$. We have that $c$ and thus also its inverse is a sum of (two) squares in $L$, so there exist $c_1,c_2\in L$ such that
	\[\frac{1}{c}=c_1^2+c_2^2\]
	For $j\in \{1,2\}$ let $v_j$ be the coordinate vector of $qc_j$ with respect to the $\R(X)$-basis $1,\al,\al^2,\dots,\al^{d-1}$ of $L$ and denote by $H\in S_d$ the Hermite matrix of $f$. Then
	\[\be(q,q)=\Tr\left( \frac{q^2}{c} \right)=\Tr\left( (qc_1)^2 \right)+\Tr\left( (qc_2)^2 \right)=v_1^{\intercal}Hv_1+v_2^{\intercal}Hv_2\]
	The left hand side is constant and can therefore be obtained by evaluating the right hand side at any $x\in\R$ where $v_1$ and $v_2$ have no pole. Since $H(x)$ is the Hermite matrix of $f(x,T)$, it is positive definite by Lemma~\ref{lem:hermite}, i.e.
	\[\be(q,q)=v_1(x)^{\intercal}H(x)v_1(x)+v_2(x)^{\intercal}H(x)v_2(x)\geq 0.\]
	By rescaling the $q_i$ with $(\be(q_i,q_i))^{-\frac{1}{2}}\in\R$ we can thus assume, that in fact $q_1,\dots,q_d$ is already an orthonormal basis.
\end{proof}

\section{Smooth approximation of real rooted polynomials}\label{sec:approx}

In order to reduce the general case of not necessarily strictly real rooted polynomials or those defining singular curves to the smooth and strictly real rooted case, we are going to show that $H_d^+$ is dense in $H_d$. The idea behind the proof of this fact is that almost any perturbation of a real rooted polynomial yields a polynomial, that defines a smooth curve, however we have to be careful to not lose the real rootedness while perturbing. This property is somehow robust to perturbation, as long as the real roots are far enough apart. So the first step of our perturbation is to get rid of possibly multiple real roots and make the polynomial strictly real rooted. This is done in Lemma~\ref{lem:multiplicity_reduction} by some ``multiplicity reduction operator'' $P_a$ and is basically the same method as used by Nuij in \citep{Nuij68} to approximate hyperbolic polynomials by strictly hyperbolic ones.

\begin{dfn}\label{defi:PandQ}
	For a commutative ring $R$ and element $a\in R$ we define
	\begin{align*}
		P_a: R[T]&\to R[T]\\
		g&\mapsto g+ag'
	\end{align*}
	If $b\in R^{\times}$ and $d\in\N$ we moreover define the ``scaling operator''
	\begin{align*}
		Q_{d,b}:R[T]_{\leq d}&\to R[T]_{\leq d}\\
		g&\mapsto b^{-d}g(bT)
	\end{align*}
\end{dfn}

\begin{lem}\label{lem:multiplicity_reduction}
	Let $f\in\R[T]$ be real rooted. Then for any nonzero $\ep\in\R$ we have
\begin{enumerate}[(1)]
	\item $P_{\ep}f=f+\ep f'$ is real rooted.
	\item $P_{\ep}$ reduces the multiplicity of roots of $f$, i.e. for each root $\la$ of $f$ of multiplicity $\mu>0$, the multiplicity of $\la$ as a zero of $P_{\ep}f$ is $\mu-1$.
	\item $P_{\ep}$ does not produce new multiple roots, i.e. each multiple root of $P_{\ep}f$ was already a multiple root of $f$ (thus of multiplicity one higher by (2)).
\end{enumerate}
\end{lem}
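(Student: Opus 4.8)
The plan is to establish (2) first by a direct factorisation, then to deduce (1) by a root‑counting argument based on the logarithmic derivative, and finally to read (3) off from that count. Since $f$ is real rooted it is monic; write $f=\prod_{i=1}^{r}(T-\la_i)^{\mu_i}$ with $\la_1<\dots<\la_r$ real and $\sum_i\mu_i=d$, and note that $P_{\ep}f=f+\ep f'$ is again monic of degree $d$, so that for (1) it suffices to produce $d$ real roots of $P_{\ep}f$ counted with multiplicity. We may assume $d\ge1$, the case $d=0$ being trivial.

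For (2), let $\la$ be a root of $f$ of multiplicity $\mu\ge1$ and write $f=(T-\la)^{\mu}g$ with $g\in\R[T]$, $g(\la)\ne0$. Then
\[P_{\ep}f=(T-\la)^{\mu-1}\bigl((T-\la)g+\ep\mu\,g+\ep(T-\la)g'\bigr),\]
and the bracketed factor takes the value $\ep\mu\,g(\la)\ne0$ at $T=\la$; hence $\la$ is a root of $P_{\ep}f$ of multiplicity exactly $\mu-1$. In particular $\la_1,\dots,\la_r$ contribute $\sum_i(\mu_i-1)=d-r$ to the root count of $P_{\ep}f$.

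For (1), away from the $\la_i$ I would factor $P_{\ep}f=f\cdot\bigl(1+\ep\,\ph\bigr)$, where $\ph:=f'/f=\sum_{i=1}^{r}\mu_i/(T-\la_i)$; the roots of $P_{\ep}f$ different from the $\la_i$ are then exactly the solutions of $\ph(T)=-1/\ep$. On each of the $r-1$ bounded intervals $(\la_i,\la_{i+1})$ the function $\ph$ is continuous, strictly decreasing (as $\ph'=-\sum_i\mu_i/(T-\la_i)^2<0$), and runs from $+\infty$ to $-\infty$, so it attains the value $-1/\ep$ exactly once; on $(-\infty,\la_1)$ it decreases from $0^-$ to $-\infty$ and on $(\la_r,\infty)$ from $+\infty$ to $0^+$, so exactly one of these two unbounded intervals contains a solution, the choice being governed by the sign of $\ep$. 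This produces $r$ further real roots, pairwise distinct and distinct from the $\la_i$. Together with the $d-r$ roots from (2) we obtain at least $d$ real roots counted with multiplicity; since $\deg P_{\ep}f=d$ these are all of them, each of the $r$ new roots is in fact simple, and $P_{\ep}f$ is real rooted.

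Finally (3) is immediate: by the previous paragraph the only possible multiple roots of $P_{\ep}f$ lie among $\la_1,\dots,\la_r$, and $\la_i$ is a multiple root of $P_{\ep}f$ only if $\mu_i-1\ge2$, i.e.\ $\mu_i\ge3$, in which case $\la_i$ is already a multiple root of $f$, of multiplicity $\mu_i=(\mu_i-1)+1$. The one step calling for genuine care is the interval analysis of $\ph$ in (1): one must be precise about the limiting behaviour of $\ph$ at the poles $\la_i$ and at $\pm\infty$, and must place the last of the $r$ new roots on whichever unbounded ray matches the sign of $-1/\ep$; everything else is routine bookkeeping. This is in essence the perturbation trick used in \citep{Nuij68}.
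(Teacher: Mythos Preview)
Your proof is correct and follows essentially the same route as the paper: both analyse the zeros of the logarithmic derivative $f'/f=\sum_i\mu_i/(T-\la_i)$ on the intervals determined by the $\la_i$, pick up one new simple zero per interval plus one on an unbounded ray, and combine these with the $(T-\la_i)^{\mu_i-1}$ factors to match the degree. The differences are purely organisational---you isolate (2) first by an explicit factorisation and use monotonicity of $\ph$ rather than sign changes at the poles---and your treatment of the limits of $\ph$ at $\pm\infty$ is in fact slightly more precise than the paper's.
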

\begin{proof}
	Let $f=\prod_{i=1}^n(T-\la_i)^{\mu_i}$, where the $\la_1<\dots<\la_n\in\R$ are the pairwise distinct zeros of $f$. We look at the zeros of
	\[g:=\frac{P_{\ep}f}{ \ep f}=\ep^{-1} +\frac{f'}{f}=\ep^{-1}+\sum_{i=1}^n \frac{\mu_i}{T-\la_i}\]
	which are a subset of the zeros of $P_{\ep}f$. At each $\la_i$ we have a simple pole of $g$ with sign change from negative to positive. Therefore there is a zero $\ga_i\in (\la_i,\la_{i+1})$ for $i<n$. And since $g(t)\to 0$ for $t\to\pm\infty$ there is another zero $\ga_n$ either in $(-\infty,\la_1)$, if $\ep>0$ or in $(\la_n,\infty)$ if $\ep<0$.
	
	So $\prod_{i=1}^n(T-\ga_i)$ divides $P_{\ep}f$. But as can easily be seen also $\prod_{i=1}^n(T-\la_i)^{\mu_i-1}$ divides $P_{\ep}f$ and comparing degrees yields
	\[P_{\ep}f=\prod_{i=1}^n(T-\ga_i)(T-\la_i)^{\mu_i-1}\]
	from which (1)-(3) follows immediately.
\end{proof}

\begin{rem}\label{rem:realrooted_vs_hyperbolic}
The following paragraphs are a little excursion towards some geometric difference between our notion of real rooted polynomials and hyperbolic polynomials. It is supposed to serve as a motivation for some technical steps and explain what we mean by roots being far enough apart to assure stability of real rootedness under perturbation. Since it is however not crucial for the proofs, some parts will stay somewhat vague.

Let $f\in\R[X,T]$ be $T$-real rooted of degree $d$ in $T$. Let $\cC:=\cV(f)$ be the affine curve defined by $f$. Denote $\pi$ the projection onto the affine line $\A^1$
\begin{align*}
	\pi:\cC&\to \A^1\\
	(x,t)&\mapsto x
\end{align*}
The real rootedness means that real points $x\in\A^1(\R)$ have real fibers $\pi^{-1}(x)\subseteq\cC(\R)$. Then $f$ is strictly real rooted if and only if $\pi$ is unramified over all real points $x\in\A^1(\R)$, i.e. $\pi^{-1}(x)$ does not contain any multiple points.

If now in addition $f\in\R[X,T]_{(1,d)}$, then its homogenization $F:=Y^df\left( \frac{X}{Y},\frac{T}{Y} \right)\in\R[X,Y,T]$ is of degree $d$ and \textbf{hyperbolic} with respect to $(0,0,1)$, i.e. the extension $\widehat{\pi}$ of $\pi$ to the projective closure $\widehat{\cC}:=\cV_+(F)\subseteq\P^2$ of $\cC$
\begin{align*}
	\widehat{\pi}: \widehat{\cC}&\to \P^1\\
	[x:y:t]&\mapsto [x:y]
\end{align*}
has real fibers over real points. $F$ is \emph{strictly hyperbolic}, if in addition $\widehat{\pi}$ is unramified over all real points in $\P^1(\R)$, i.e. if $f$ is strictly real rooted and in addition has ``only simple roots at infinity''. In our setting we consider polynomials that can have arbitrary degree in $X$, which makes it a bit more technical to analyze the behavior ``at infinity''.


If now $f\in H_{k,d}\setminus H_{1,d}$ for some $k>1$, then in general the roots of $f(x,T)$ will grow like $x^k$ when $x$ gets large, so they all pass through the point
\[\lim_{x\to\infty}[x:1:x^k]=\lim_{x\to\infty}[x^{-k+1}:x^{-k}:1]=[0:0:1]\]
when viewed in $\widehat{\cC}$. But therefore $\widehat{\pi}$ will be totally ramified in this point, which is mapped to the real point $\infty\in\P^1(\R)$. So in this sense $f$ generally has multiple roots at infinity if $f\in\R[X,T]_{(k,d)}$ for $k>1$. We get a more suitable notion of simple roots at infinity in our setup, if we look at them after rescaling, namely at the zeros of $Q_{d,X^k}f\in\R[X^{-1},T]$, see Lemma~\ref{lem:PandQ}. For a generic polynomial $f\in\R[X,T]_{(k,d)}$ the zeros of $Q_{d,X^k}f(\infty,T)$ will all be simple. Denote $g\in\R[Y,T]$ the polynomial that we get by replacing $X^{-1}$ in $Q_{d,X^k}f$ by $Y$. Then we can glue together the two affine curves $\cV(f)$ and $\cV(g)$ via the isomorphism
\begin{align*}
	\ph:\cV(f)\setminus \cV(X)&\to\cV(g)\setminus \cV(Y)\\
	(x,t)&\mapsto (x^{-1},x^{-k}t)
\end{align*}
to get a curve $\widetilde{\cC}$ which ``distinguishes the points at infinity'' i.e. the projection $\pi: \widetilde{\cC}\to\P^1$ is unramified over $\infty$, if $Q_{d,X^k}f(\infty,T)=g(0,T)$ has only simple roots. So what we will take as a reasonable strengthening of the strict real rootedness of $f$ is that $\widetilde{\pi}$ is unramified over all of $\P^1(\R)$. Then small perturbations of $f$ will still be real rooted. See Lemma~\ref{lem:approx}.

Note that in the case $f\in\R[X,T]_{(1,d)}$ this curve $\widetilde{\cC}$ coincides with the projective closure $\widehat{\cC}$, where $V(f)$ and $V(g)$ are the two affine charts that lie over $\P^1\setminus \{\infty\}$ and $\P^1\setminus \{0\}$, respectively. So in this special case our additional condition on $f$, is simply \textbf{strict hyperbolicity} of $F$, as described above.

Another way of viewing simplicity of roots of $Q_{d,X^k}f(\infty,T)$ is that the mutual distances between the roots of $f(x,T)$ also grow like $x^k$ for large $x$.
\end{rem}

\begin{lem}\label{lem:PandQ}Let $R$ be a ring and $k,d\in\N$.
	\begin{enumerate}[(1)]
		\item Using the chain rule for differentiation we see that $P_aQ_{d,b}=Q_{d,b}P_{ab}$ for all $a\in R$ and $b\in R^{\times}$.
		\item If $a\in \R[X]$ is of degree at most $k$, then $\R[X,T]_{(k,d)}$ is $P_a$-invariant.
		\item $Q_{d,X^k}\R[X,T]_{(k,d)}\subseteq\R[X^{-1},T]$, where we view $b=X^k$ as a unit in the ring $\R(X)$.
		\item If $f\in\R[X,T]_{(k,d)}$, then $f$ is strictly $T$-real rooted on $\R\setminus \{0\}$ if and only if $Q_{d,X^k}f$ is.
	\end{enumerate}
\end{lem}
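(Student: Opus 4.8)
The four assertions are all routine consequences of the definitions of $P_a$ and $Q_{d,b}$ from Definition~\ref{defi:PandQ}, so the proof is mostly a matter of careful bookkeeping. I would dispose of them in the stated order.

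For (1), the plan is a direct computation via the chain rule. Write $g\in R[T]$ and set $h:=Q_{d,b}g=b^{-d}g(bT)$. Then $h'(T)=b^{-d}\cdot b\cdot g'(bT)=b^{-d+1}g'(bT)$, so $P_{ab}g=g+abg'$ yields
\[
Q_{d,b}(P_{ab}g)=b^{-d}\bigl(g(bT)+abg'(bT)\bigr)=b^{-d}g(bT)+a\,b^{-d+1}g'(bT)=h+a\,h'=P_a h=P_a Q_{d,b}g,
\]
which is the claimed identity of operators on $R[T]_{\le d}$ (note $Q_{d,b}$ needs the degree restriction but $P_a$ does not, so the composite is well-defined). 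For (2), I would simply check that $P_a$ preserves the weighted-degree bound $\deg a_i\le k(d-i)$: if $f=\sum_{i=0}^d a_i T^i$ with $\deg a_i\le k(d-i)$ and $\deg a\le k$, then $f'=\sum_{i=1}^d i\,a_i T^{i-1}$ has $i$-th coefficient $(i+1)a_{i+1}$ of degree $\le k(d-i-1)\le k(d-i)$, and multiplying by $a$ raises the degree by at most $k$, giving $\deg(a\cdot(i+1)a_{i+1})\le k(d-i)$; adding $a_i$ keeps the bound. For (3), taking $b=X^k$ and $f=\sum a_iT^i\in\R[X,T]_{(k,d)}$, one has $Q_{d,X^k}f=X^{-kd}\sum a_i X^{ki}T^i=\sum a_i X^{-k(d-i)}T^i$, and since $\deg a_i\le k(d-i)$ each coefficient $a_i X^{-k(d-i)}$ is a polynomial in $X^{-1}$; hence $Q_{d,X^k}f\in\R[X^{-1},T]$.

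For (4), the point is that $Q_{d,X^k}$ acts on each fibre by an affine rescaling of the $T$-coordinate that preserves real-rootedness. Fix $x\in\R\setminus\{0\}$ and abbreviate $b:=x^k\in\R^\times$; then $(Q_{d,X^k}f)(x,T)=b^{-d}f(x,bT)=Q_{d,b}\bigl(f(x,T)\bigr)$ as univariate polynomials in $\R[T]_{\le d}$. Now $f(x,T)$ is monic of degree $d$ with real simple roots $\la_1,\dots,\la_d$ if and only if $Q_{d,b}\bigl(f(x,T)\bigr)=b^{-d}\prod_i(x^k T-\la_i)=\prod_i(T-b^{-1}\la_i)$ is monic of degree $d$ with the real simple roots $b^{-1}\la_1,\dots,b^{-1}\la_d$ (the map $\mu\mapsto b^{-1}\mu$ being a bijection of $\R$ preserving distinctness, since $b\ne0$). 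Quantifying over all $x\in\R\setminus\{0\}$—and observing that as $x$ ranges over $\R\setminus\{0\}$, $b=x^k$ ranges over a subset of $\R^\times$ while the correspondence $x\leftrightarrow x^{-1}$ matches the variable substitution defining $Q_{d,X^k}f\in\R[X^{-1},T]$ evaluated at points of $U\setminus\{\infty\}$ in the sense of Remark~\ref{rem:infinity}—gives that $f$ is strictly $T$-real rooted on $\R\setminus\{0\}$ exactly when $Q_{d,X^k}f$ is.

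There is no real obstacle here; the only mildly delicate point is bookkeeping in (4): one must be careful that ``strictly $T$-real rooted on $\R\setminus\{0\}$'' for $f\in\R[X,T]$ is being compared with the same notion for $Q_{d,X^k}f\in\R[X^{-1},T]$ via the substitution $X\mapsto X^{-1}$, i.e. one is really checking the two conditions fibrewise at corresponding points and the map $x\mapsto x^k$ is never zero on $\R\setminus\{0\}$ so the rescaling $Q_{d,b}$ is invertible there. If $k=0$ the statement is trivial since $Q_{d,X^0}=\id$; for $k\ge1$ the above argument goes through verbatim.
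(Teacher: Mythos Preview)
Your proof is correct and follows essentially the same approach as the paper: direct computation via the chain rule for (1), checking the weighted-degree condition coefficientwise for (2) and (3), and observing in (4) that evaluation at $x\neq 0$ turns $Q_{d,X^k}$ into the invertible scalar rescaling $Q_{d,x^k}$, so the roots of $f(x,T)$ and of $(Q_{d,X^k}f)(x,T)$ correspond bijectively via $\lambda\mapsto x^{-k}\lambda$. The only difference is cosmetic: your discussion in (4) about the correspondence $x\leftrightarrow x^{-1}$ is unnecessary, since in the paper's convention $(Q_{d,X^k}f)(x,T)$ simply means substituting $X=x$, and you could drop that parenthetical without loss.
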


\begin{proof}
	These are all straight forward computations: (1) For $f\in R[T]$, $a\in R$ and $b\in R^{\times}$ we have
	\begin{align*}
		P_aQ_{d,b}f&=P_a\left( b^{-d}f(bT) \right)=b^{-d}f(bT)+a b^{-d}\frac{\partial }{\partial T}\left( f(bT) \right)=\\
		&=b^{-d}f(bT) + ab^{-d+1} \left( \frac{\partial f}{\partial T} \right)(bT)=b^{-d}\left( f+ab \frac{\partial f}{\partial T} \right)(bT)=\\
		&=b^{-d}(P_{ab}f)(bT)=Q_{d,b}P_{ab}f
	\end{align*}

	(2) If $f\in\R[X,T]_{(k,d)}$, i.e. $f=\sum_{i=0}^d a_iT^i$ with $a_i\in\R[X]$ of degree at most $k(d-i)$, then $f'=\sum_{i=0}^{d-1}b_iT^i$, where $b_i=(i+1)a_{i+1}$ is of degree at most $k(d-i-1)$ and therefore, if $a\in R[X]$ is of degree at most $k$, then $ab_i$ is of degree at most $k(d-i)$ and thus $af'\in\R[X,T]_{(k,d)}$.

	(3) If $f\in\R[X,T]_{(k,d)}$ is as in (2), then $Q_{d,X^k}f=\sum_{i=0}^da_iX^{k(i-d)}T^i$ and therefore all coefficients lie in $\R[X^{-1}]$.

	(4) Let $f\in\R[X,T]_{(k,d)}$. Then $Q_{d,X^k}$ does not change the coefficient of $T^d$ and for all $x\in\R$ the map $\la\mapsto x^k\la$ is a bijection between the roots of $Q_{d,X^k}f(x,T)$ and the roots of $f(x,T)$, under which realness and simplicity is preserved.
\end{proof}


\begin{lem}\label{lem:approx}
	For any $k,d\in\N$ the set $H_{k,d}^+$ is dense in $H_{k,d}$.
\end{lem}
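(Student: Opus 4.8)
The plan is to start with an arbitrary $f \in H_{k,d}$ and produce a nearby element of $H_{k,d}^+$ in two stages, following the two-stage strategy announced in the text before Lemma \ref{lem:multiplicity_reduction} and in Remark \ref{rem:realrooted_vs_hyperbolic}: first make the polynomial strictly $T$-real rooted (kill multiple roots in the finite fibers and, via $Q_{d,X^k}$, also at infinity) using the operator $P_\ep$, then perturb generically to make the curve $\cV(f)$ smooth while staying inside the region of strict real-rootedness, which by the robustness discussed in Remark \ref{rem:realrooted_vs_hyperbolic} is an open condition once the roots are uniformly separated on all of $\P^1(\R)$.

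First I would apply the multiplicity-reduction operator fiberwise. Since $f \in H_{k,d} \subseteq \R[X,T]_{(k,d)}$, Lemma \ref{lem:PandQ}(2) shows $P_\ep f \in \R[X,T]_{(k,d)}$ for $\ep \in \R$ (degree $\le k \cdot 0 = k$ on the constant $a$, but here $a=\ep$ has degree $0$, which is fine), and Lemma \ref{lem:multiplicity_reduction}(1) applied to each fiber $f(x,T)$ shows $P_\ep f \in H_{k,d}$. To handle the roots at infinity I would pass through the rescaling: by Lemma \ref{lem:PandQ}(1), $P_\ep Q_{d,X^k} f = Q_{d,X^k} P_{\ep X^k} f$, so applying $P_{\ep X^k}$ (note $\ep X^k$ has degree $\le k$, so Lemma \ref{lem:PandQ}(2) still gives invariance of $\R[X,T]_{(k,d)}$) reduces multiplicities of the roots of $Q_{d,X^k} f(\infty,T) = g(0,T)$ as well, by applying Lemma \ref{lem:multiplicity_reduction} to the fiber of $Q_{d,X^k}f$ at $\infty$ — using the extension of real-rootedness to $\R[X^{-1},T]$ from Remark \ref{rem:infinity}. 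After finitely many such operations (at most $d-1$ of each type, or one generic $\ep$ suffices since $P_\ep$ only decreases multiplicities and never creates new ones by Lemma \ref{lem:multiplicity_reduction}(3)), I obtain $f_1 \in H_{k,d}$, arbitrarily close to $f$ for small $\ep$, such that $f_1(x,T)$ has only simple roots for every $x \in \P^1(\R)$ — i.e., in the language of Remark \ref{rem:realrooted_vs_hyperbolic}, the map $\widetilde\pi$ on the glued curve $\widetilde{\cC}$ is unramified over all of $\P^1(\R)$.

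Second, from such an $f_1$ I would perturb within $\R[X,T]_{(k,d)}$ — a finite-dimensional real vector space — to achieve smoothness of $\cV(f_1)$ while keeping strict real-rootedness. Strict real-rootedness over the compact set $\P^1(\R)$ is an open condition: if all fibers over $\P^1(\R)$ have $d$ simple real roots, a uniform lower bound on the pairwise distances of roots and on $|f_1'|$ at those roots persists under $C^0$-small (hence coefficient-small) perturbations, so a whole neighborhood of $f_1$ in $\R[X,T]_{(k,d)}$ lies in $H_{k,d}$ (this is essentially the content alluded to at the end of Remark \ref{rem:realrooted_vs_hyperbolic}, "small perturbations of $f$ will still be real rooted"). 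On the other hand, the set of $g \in \R[X,T]_{(k,d)}$ whose squarefree core $\widetilde g$ defines a singular affine curve is contained in a proper Zariski-closed (hence nowhere dense, measure-zero) subset: genericity of smoothness is a standard Bertini-type fact, and one can also argue directly that the discriminant/resultant conditions forcing a singular point cut out a proper subvariety, using that $f_1$ itself already has squarefree fibers so is not identically singular. Intersecting the open neighborhood with the complement of this closed set yields $f_2 \in H_{k,d}^+$ arbitrarily close to $f$.

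The main obstacle is the second stage: one must verify carefully that the "generic polynomial in $\R[X,T]_{(k,d)}$ defines a smooth curve" claim is genuinely available in this constrained linear system (the coefficient of $a_i$ is forced to have degree $\le k(d-i)$, so it is not the full space of degree-$(k,d)$ polynomials), and that the open set of strictly real-rooted polynomials is not accidentally contained in the singular locus. I would resolve this by exhibiting, near $f_1$, an explicit perturbation direction that both stays in $\R[X,T]_{(k,d)}$ and removes any prospective singularity — for instance adding a small generic element of the subspace, and checking that the hypersurface of "$\cV(f)$ has a singular point" does not contain the affine line through $f_1$ in that direction — so that the singular locus meets our neighborhood only in a proper closed subset. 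The first stage, by contrast, is entirely handled by Lemmas \ref{lem:multiplicity_reduction} and \ref{lem:PandQ} together with the continuity of the operators $P_\ep$ in $\ep$.
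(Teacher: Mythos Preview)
Your plan follows the paper's two-stage strategy, and the first stage essentially matches the paper's chain $M_0\to M_1\to M_2$: apply $P_\ep^{d-1}$ with constant $\ep$ to make all finite fibers strictly real-rooted, then $P_{\ep X^k}^{d-1}$ to handle the fiber of $Q_{d,X^k}f$ at $\infty$. Two small corrections: the order matters, since $P_{\ep X^k}$ acts as the identity on the fiber over $x=0$, so the constant-$\ep$ step must come first; and your parenthetical ``one generic $\ep$ suffices'' is not right---by Lemma~\ref{lem:multiplicity_reduction}(2) each application lowers multiplicities by exactly one, so $d-1$ iterations are genuinely needed in the worst case.

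The real divergence is in your second stage. You invoke a Bertini-type genericity argument for smoothness and (rightly) flag it as the main obstacle, since the linear system $\R[X,T]_{(k,d)}$ is constrained. The paper bypasses this with an explicit, elementary two-step perturbation. First add $\ep T$ for small $\ep>0$: this keeps $f$ in the open set $M_2$ (here is where the compactness argument over $\P^1(\R)$ establishing openness of $M_2$ is actually used), leaves $\partial f/\partial X$ unchanged, and shifts $\partial f/\partial T$ by the constant $\ep$; since each prime factor of $\partial f/\partial X$ can divide $\partial f/\partial T+\ep$ for at most one value of $\ep$, the two partial derivatives become coprime for almost all $\ep$. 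Now their common zero set is finite, so a further small constant perturbation $f\mapsto f+\ep$ misses all of these finitely many potential singular points, yielding a smooth curve. This avoids any appeal to genericity in a constrained linear system and makes the whole argument self-contained. Your vaguer plan could presumably be completed, but the paper's trick is both simpler and what the surrounding text is set up for.
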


\begin{proof}
	We will prove the density in a couple of intermediate steps. For this we define a descending chain of sets $M_0:=H_{k,d}\supseteq \dots\supseteq M_4$ such that $M_4\subseteq H_{k,d}^+$ and show density at each inclusion.
	\begin{itemize}
		\item $M_1:=\{\, f\in M_0 \mid f(0,T)\text{ is strictly real rooted}\,\}$
		\item $M_2:=\{\, f\in M_1 \mid f \text{ and }(Q_{d,X^k}f)(\infty,T)\text{ are strictly $T$-real rooted }\footnote{See Remark \ref{rem:infinity}\text{ for this notation.}}\,\}$
		\item $M_3:=\{\, f\in M_2 \mid \frac{\partial f}{\partial X}, \frac{\partial f}{\partial T}\text{ are coprime}\,\}$
		\item $M_4:=\{\, f\in M_3 \mid \cV(f)\text{ is smooth}\,\}$
	\end{itemize}
	As explained in the beginning of this section, the idea is to first approximate a $T$-real rooted polynomial $f\in\R[X,T]$ by some $g\in\R[X,T]$ for which the zeros of $g(x,T)$ stay far enough apart for all $x$. This assures, that under further perturbation, the real rootedness is not lost. Here ``far enough apart'' basically means that $g\in M_2$.

	Let $f\in M_0$. Then $f_{\ep}:= P_{\ep}^{d-1}f\in M_1$ for all nonzero $\ep\in\R$ by Lemma~\ref{lem:multiplicity_reduction}, since
	\[f_{\ep}(x,T)=\left( P_{\ep}^{d-1}f \right)(x,T)=P_{\ep_1}^{d-1}\left( f(x,T) \right)\]
	is strictly real rooted for all $x\in\R$, in particular for $x=0$. And by Lemma~\ref{lem:PandQ} $\R[X,T]_{(k,d)}$ is invariant under $P_{\ep}$.

	Let $f\in M_1$. Then $f_{\ep}:= P_{\ep X^k}^{d-1}f\in M_2$ for all nonzero $\ep\in\R$: Again by Lemma~\ref{lem:multiplicity_reduction} we have that
	\[f_{\ep}(x,T)=\left( P_{\ep X^k}^{d-1}f \right)(x,T)=P_{\ep x^k}^{d-1}\left( f(x,T) \right)\]
	is strictly real rooted for all nonzero $x\in\R$, as well as $f_{\ep}(0,T)=f(0,T)$. Therefore $f_{\ep}$ is strictly $T$-real rooted and again $f_{\ep}\in\R[X,T]_{(k,d)}$ by Lemma~\ref{lem:PandQ}. Moreover we can use the same Lemma to see that also
	\begin{align*}
		\left( Q_{d,X^k}f_{\ep} \right)(\infty,T)=\left( Q_{d,X^k}P_{\ep X^k}^{d-1}f \right)(\infty,T)&=\\
		=\left( P_{\ep}^{d-1}Q_{d,X^k}f \right)(\infty,T)=P_{\ep}^{d-1}\left( (Q_{d,X^k}f)(\infty,T) \right)&
	\end{align*}
	is strictly real rooted.

	Let $f\in M_2$. Then $f_{\ep}:=f+\ep T\in M_3$ for $0<\ep\in\R$ small enough: $\frac{\partial f_{\ep}}{\partial X}$ and $\frac{\partial f_{\ep}}{\partial T}$ are coprime for almost all $\ep$, because every prime factor of $\frac{\partial f_{\ep}}{\partial X}=\frac{\partial f}{\partial X}$ divides $\frac{\partial f_{\ep}}{\partial T}=\frac{\partial f}{\partial T}+\ep$ for at most one $\ep$. In order to see that $f_{\ep}$ is still in $M_2$ for small $\ep$, we check that the strict real rootedness of $f$ and $(Q_{d,X^k}f)(\infty,T)$ is an open condition. Fix a compact neighborhood $K_1=\P^1(\R)\setminus (-c,c)$ of $\infty$. Then $Q_{d,X^k}f$ is strictly $T$-real rooted on $K_1$ (see Lemma~\ref{lem:PandQ},(4)). Since the zeros of a monic polynomial depend continuously on its coefficients, we see that also for all $g$ in a neighborhood $U_1$ of $f$ we have that $Q_{d,X^k}g$ is strictly $T$-real rooted on $K_1$. Likewise we get a neighborhood $U_2$ of $f$, such that all $g\in U_2$ are strictly $T$-real rooted on $K_2:=[-c,c]$. But then for all $g\in U_1\cap U_2$ we have that $g$ is strictly $T$-real rooted on $K_1\setminus \{\infty\}\cup K_2=\R$ and $(Q_{d,X^k}g)(\infty,T)$ is strictly real rooted. That means the open neighborhood $U_1\cap U_2$ of $f$ lies in $M_2$.

		Let $f\in M_3$. Then $f_{\ep}:=f+\ep\in M_4$ for $\ep>0$ small enough: The condition on the coprimeness of the partial derivatives is trivially satisfied, because $f_{\ep}$ and $f$ differ only by a constant. Moreover we have seen, that $M_2$ is open. So we only need to check that $V(f_{\ep})$ is smooth for small $\ep>0$. But this is easy, since we just have to make sure that $f+\ep$ is nonzero on the finitely many common zeros of $\frac{\partial f}{\partial X}$ and $\frac{\partial f}{\partial T}$.
\end{proof}

\section{Refinement}\label{sec:refine}
In this section we are going to investigate properties of the map $\chi_{k,d}$ a bit further. On the one hand we will get a better understanding of how the degree of the entries of a matrix in $S_d$ is related to the degree on the coefficients of its characteristic polynomial. Namely the next lemma tells us, that a matrix in $S_d$ has degree at most $k$, if and only if its characteristic polynomial lies in $\R[X,T]_{(k,d)}$. On the other hand, we will see that $\chi_{k,d}$ is in a way well behaved with respect to approximations, in the sense that it is a proper map. Those two, at first sight seemingly unrelated properties, are in fact based on the same principle, namely that the size of the eigenvalues of a real symmetric matrix yields a bound on its coefficients.

\begin{lem}\label{lem:proper}
	Let $k,d\in\N$. Then
	\begin{enumerate}[(1)]
		\item $\ch_d^{-1}(\R[X,T]_{(k,d)})=S_{k,d}$
		\item $\ch_{k,d}:S_{k,d}\to\R[X,T]_{(k,d)}$ is proper
		\item $\im \ch_{k,d}$ is closed in $\R[X,T]_{(k,d)}$ 
	\end{enumerate}
\end{lem}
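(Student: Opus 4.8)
The three parts are tied together by a single principle: the entries of a symmetric matrix over $\R[X]$ are controlled, pointwise in $X$, by the size of its eigenvalues, which in turn is controlled by the coefficients of the characteristic polynomial. I would set up once and for all the elementary quantitative fact: if $A$ is a real symmetric $d\times d$ matrix with characteristic polynomial $T^d + c_{d-1}T^{d-1} + \dots + c_0$, then every eigenvalue $\lambda$ of $A$ satisfies $|\lambda| \le 1 + \max_i |c_i|$ (a Cauchy-type root bound), and hence, since $A$ is symmetric, $|a_{ij}| \le \|A\|_{\mathrm{op}} = \max_i |\lambda_i| \le 1 + \max_i|c_i|$ for all entries $a_{ij}$. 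This inequality is the workhorse for all three parts.

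\emph{Part (1).} The inclusion $S_{k,d}\subseteq \ch_d^{-1}(\R[X,T]_{(k,d)})$ is a direct degree count: if $A=(a_{ij})$ with $\deg a_{ij}\le k$, then the coefficient of $T^{d-i}$ in $\det(T-A)$ is (up to sign) a sum of $i\times i$ minors of $A$, each a sum of products of $i$ entries, hence of degree at most $ki = k(d-(d-i))$; so $\ch_d(A)\in\R[X,T]_{(k,d)}$. For the reverse inclusion, suppose $A\in S_d$ with $\ch_d(A)=\sum_{i=0}^d c_i(X)T^i \in \R[X,T]_{(k,d)}$, i.e. $\deg c_i \le k(d-i)$; after dividing through, for each fixed $x\in\R$ the matrix $x^{-k}\!\cdot\!$(scaled version) — more cleanly: apply $Q_{d,|x|^k}$-type scaling, or just observe directly — the roots of $\ch_d(A)(x,T)$ grow at most like $|x|^k$ as $|x|\to\infty$, because $\max_i |c_i(x)|^{1/(d-i)}$ grows at most like $|x|^k$. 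By the root bound above, the eigenvalues of $A(x)$, and hence all entries $a_{ij}(x)$, are $O(|x|^k)$ as $|x|\to\infty$. A polynomial that is $O(|x|^k)$ has degree at most $k$, so $\deg a_{ij}\le k$, i.e. $A\in S_{k,d}$.

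\emph{Part (2).} Properness: I must show that if $C\subseteq \R[X,T]_{(k,d)}$ is compact (in the natural finite-dimensional affine space, with coordinates the bounded-degree coefficients) then $\ch_{k,d}^{-1}(C)$ is compact. It is closed since $\ch_{k,d}$ is continuous (polynomial), so the point is boundedness. For $A\in\ch_{k,d}^{-1}(C)$, each coefficient $c_i(X)$ of $\ch_d(A)$ ranges over a compact set, so there is a uniform bound $|c_i(x)|\le B(1+|x|)^{k(d-i)}$ on all of $\R$ with $B$ independent of $A$; plugging into the root bound gives $|a_{ij}(x)|\le \mathrm{const}\cdot(1+|x|)^k$ uniformly, which since $\deg a_{ij}\le k$ bounds each of the (finitely many) coefficients of each $a_{ij}$ uniformly. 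Hence $\ch_{k,d}^{-1}(C)$ is bounded, thus compact. The one technical care-point — and I expect this to be the main obstacle — is making the root-size bound genuinely \emph{uniform} over the parameter $x\in\R$ and over $A\in C$ simultaneously, i.e. converting ``$\deg c_i\le k(d-i)$ with coefficients in a compact set'' into a clean global inequality $|c_i(x)|\le B(1+|x|)^{k(d-i)}$ and then chasing it through $|\lambda|^{d}\le \sum|c_i||\lambda|^{i}$ to get $|\lambda| = O(1+|x|)^k$; this is elementary but needs to be written carefully to avoid hand-waving, and one should double-check the edge behaviour for small $|x|$ as well as large.

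\emph{Part (3).} This is now formal: $\R[X,T]_{(k,d)}$ is finite-dimensional, hence locally compact and $\sigma$-compact, and a continuous proper map into a locally compact Hausdorff space is closed. Concretely, given a point $g$ in the closure of $\im\ch_{k,d}$, take a compact neighbourhood $C$ of $g$; then $\ch_{k,d}^{-1}(C)$ is compact by (2), its image under the continuous map $\ch_{k,d}$ is compact hence closed and contains all points of $\im\ch_{k,d}$ near $g$, so it contains $g$; thus $g\in\im\ch_{k,d}$.
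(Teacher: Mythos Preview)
Your argument is correct and rests on the same principle as the paper's: for a real symmetric matrix the entries are bounded by the spectral radius, which in turn is bounded by the coefficients of the characteristic polynomial. The organization, however, differs. The paper first isolates the \emph{scalar} case $k=0$ (properness of $\ch_{0,d}:\Sym_d(\R)\to\R[T]_{\leq d}$) and then reduces both the hard inclusion in (1) and the general case of (2) to it: for (1) one rescales via $B:=X^{-k}A$ so that $\det(T-B)=Q_{d,X^k}f\in\R[X^{-1},T]$, and boundedness of $\{B(x):x>1\}$ (from the scalar case) forces $B\in\R[X^{-1}]^{d\times d}$, i.e.\ $A\in S_{k,d}$; for (2) one uses equivalence of norms on the finite-dimensional space $\R[X]_{\leq k}$ to replace your global inequality $|c_i(x)|\le B(1+|x|)^{k(d-i)}$ by a sup-norm bound on the compact interval $[0,1]$, again reducing to the scalar case. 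This packaging avoids precisely the ``technical care-point'' you flag, namely chasing a Fujiwara-type bound $|\lambda|\le 2\max_i|c_{d-i}|^{1/i}$ uniformly in $x$ and in $A$ (note your stated Cauchy bound $|\lambda|\le 1+\max_i|c_i|$ is too crude here, since $\max_i|c_i(x)|$ grows like $|x|^{kd}$, not $|x|^k$; you already implicitly switch to the sharper bound). Your route is more explicit and self-contained; the paper's is slightly slicker in that the scaling trick $Q_{d,X^k}$ and the choice of norm absorb the bookkeeping. Part (3) is handled identically.
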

Here the words ``proper'' and ``closed'' refer to the Euclidean topology on finite dimensional $\R$-vector spaces. In the proof we will make use of the fact that any two norms induce the same topology and therefore we can conveniently choose what ``bounded'' means.

\begin{proof} Let us first note that one inclusion in (1), namely
	\[\ch_d(S_{k,d})\subseteq\R[X,T]_{(k,d)}\]
	is easy: Assume $A\in S_{k,d}$, i.e. its entries are of degree at most $k$. Then the coefficient of $T^i$ in $f:=\det(T-A)$ is a sum of $d-i$-minors of $A$ and thus of degree at most $k(d-i)$ in $X$, i.e. $f\in\R[X,T]_{(k,d)}$. This also makes the map in (2) welldefined.

	The next step is to prove the scalar version of (2), i.e. for $k=0$. From this we will deduce the case for general $k$, and also the other inclusion in (1).

	\medskip
	Let $\ch:=\ch_{0,d}:\Sym_d(\R)\to\R[T]_{\leq d}$. Obviously $\ch$ is continuous, because it is given by polynomials in the entries of the matrices. Assume we have a bounded set $F\subseteq\R[T]_{\leq d}$. We want to show that $\ch^{-1}(F)$ is also bounded. Since $\im\ch$ consists of only monic polynomials, we can assume that also $F$ consists only of monic polynomials. Therefore a bound on the coefficients of the elements of $F$ gives also a bound on their zeros which are the eigenvalues of the matrices in $\ch^{-1}(F)$. Moreover, the operator norm of a real symmetric matrix coincides with its maximal eigenvalue and thus we see that $\ch^{-1}(F)$ is bounded as soon as $F$ is bounded.

	\medskip
	Now let $k\in\N$ be arbitrary. The idea of the proof is the following: Boundedness in a finite dimensional vector space of polynomials in $X$ corresponds to a uniform bound on some proper compact interval, while a bound on the degree in $X$ corresponds to a uniform bound in a neighborhood of infinity, after rescaling in an appropriate manner.

	So first take $A\in S_d$ such that $f:=\det(T-A)\in\R[X,T]_{(k,d)}$. We want to show that the entries of $A$ have degree at most $k$. Therefore we look at
	\[g:=Q_{d,X^k}f=X^{-kd}\det(X^kT-A)=\det(T-X^{-k}A)\]
	By Lemma~\ref{lem:PandQ} we have $g\in\R[X^{-1},T]$. In order to see that already $B:=X^{-k}A\in\R[X^{-1}]^{d\times d}$ and therefore $A\in S_{k,d}$, we look at the set
	\[B_{(1,\infty)}:=\{\, B(x) \mid x\in(1,\infty)\,\}\subseteq \ch^{-1}(g_{(1,\infty)})\]
	where
	\[g_{(1,\infty)}:=\{\, g(x,T) \mid x\in(1,\infty)\,\}\subseteq\R[T]_{\leq d}\]
	The coefficients of the elements of $g_{(1,\infty)}$ are bounded, since they are images of polynomial mappings in $X^{-1}$ which is bounded on $(1,\infty)$. Therefore $g_{(1,\infty)}$ is a bounded set in $\R[T]_{\leq d}$. By the scalar case we therefore know that $B_{(1,\infty)}$ is also bounded, say entrywise. But that implies that non of the entries of $B$ can have positive degree in $X$. This proves the second inclusion in (1).

	\medskip
	Now take $F\subseteq\R[X,T]_{(k,d)}$ any bounded set, say coefficientwise bounded in the supremum norm on $[0,1]$, i.e.
	\[F_{[0,1]}:=\{\, f(x,T) \mid f\in F,\ x\in[0,1]\,\}\subseteq\R[T]_{\leq d}\]
	is bounded. For $M:=\ch_d^{-1}(F)\subseteq\R[X,T]_{(k,d)}$ we have
	\[M_{[0,1]}:=\{\, A(x) \mid A\in M,\ x\in[0,1]\,\}\subseteq \ch^{-1}(F_{[0,1]})\]
	which is also (entrywise) bounded, again by the scalar case. Therefore $M$ must be bounded, again say entrywise in the supremum norm on $[0,1]$. This proves (2).

	\medskip
	The closedness of the image of $\ch_{k,d}$ follows immediately from the properness, due to the fact that every finite dimensional $\R$-vector space is a locally compact Hausdorff space and therefore every proper map into it is closed, which is an easy exercise.
\end{proof}

From the smooth version and the previous lemmas we immediately get our main theorem:

\begin{thm}\label{thm:realrooted}
	$H_{k,d}=\im \ch_{k,d}$. In other words if $f\in\R[X,T]_{(k,d)}$ is $T$-real rooted, then there exists a symmetric $T$-spectral determinantal representations whose entries have at most degree $k$ in $X$.
\end{thm}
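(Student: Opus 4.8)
The plan is to assemble the three principal results of the preceding sections: the smooth strictly real rooted case of Theorem~\ref{thm:smoothrealrooted}, the density of $H_{k,d}^+$ in $H_{k,d}$ from Lemma~\ref{lem:approx}, and the closedness of $\im\ch_{k,d}$ from Lemma~\ref{lem:proper}~(3). No new ideas are needed; the theorem should be a formal consequence of these.

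First I would record the easy inclusion $\im\ch_{k,d}\subseteq H_{k,d}$. If $A\in S_{k,d}$, then the elementary half of Lemma~\ref{lem:proper}~(1) already gives $\ch_d(A)\in\R[X,T]_{(k,d)}$, and for every $x\in\R$ the matrix $A(x)$ is real symmetric, so the univariate polynomial $\det(T-A(x))$ — which is the specialization of $\ch_d(A)$ at $X=x$ — has only real roots. Hence $\ch_d(A)$ is $T$-real rooted and lies in $H_d\cap\R[X,T]_{(k,d)}=H_{k,d}$.

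For the reverse inclusion I would first observe that $H_{k,d}^+\subseteq\im\ch_{k,d}$: given $f\in H_{k,d}^+\subseteq H_d^+$, Theorem~\ref{thm:smoothrealrooted} furnishes some $A\in S_d$ with $\ch_d(A)=f$, and since $f\in\R[X,T]_{(k,d)}$, Lemma~\ref{lem:proper}~(1) forces $A\in\ch_d^{-1}(\R[X,T]_{(k,d)})=S_{k,d}$, so $f=\ch_{k,d}(A)\in\im\ch_{k,d}$. Now $\im\ch_{k,d}$ is closed in $\R[X,T]_{(k,d)}$ by Lemma~\ref{lem:proper}~(3), so taking closures in the inclusion $H_{k,d}^+\subseteq\im\ch_{k,d}$ and using that $H_{k,d}^+$ is dense in $H_{k,d}$ (Lemma~\ref{lem:approx}) yields $H_{k,d}=\overline{H_{k,d}^+}\subseteq\overline{\im\ch_{k,d}}=\im\ch_{k,d}$, which together with the first inclusion gives the claim, and Theorem~\ref{thm:heltonvinnikov} then follows by homogenizing in the case $k=1$.

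I do not anticipate any genuine obstacle here, since all the substantive work has already been done: the algebraic geometry behind Theorem~\ref{thm:smoothrealrooted}, the perturbation arguments of Lemma~\ref{lem:approx}, and the eigenvalue bound yielding properness — hence closedness of the image — in Lemma~\ref{lem:proper}. The only point requiring a little care is the degree bookkeeping, namely that a symmetric spectral representation of a polynomial in $\R[X,T]_{(k,d)}$ automatically has entries of degree at most $k$ in $X$; but this is precisely the content of Lemma~\ref{lem:proper}~(1), and it is what allows the passage from $\ch_d$ to $\ch_{k,d}$ in the density/closedness argument above.
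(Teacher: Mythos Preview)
Your proposal is correct and matches the paper's own proof essentially line for line: both combine Theorem~\ref{thm:smoothrealrooted} with Lemma~\ref{lem:proper}(1) to obtain $H_{k,d}^+\subseteq\im\ch_{k,d}$, then invoke the density from Lemma~\ref{lem:approx} together with the closedness from Lemma~\ref{lem:proper}(3), and handle the easy inclusion via real eigenvalues of symmetric matrices plus Lemma~\ref{lem:proper}(1). The only cosmetic point is that you write $H_{k,d}=\overline{H_{k,d}^+}$ where strictly only the inclusion $H_{k,d}\subseteq\overline{H_{k,d}^+}$ is needed (and is exactly what density gives), but this does not affect the argument.
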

\begin{proof}
	Theorem~\ref{thm:smoothrealrooted} says that $H^+_{k,d}\subseteq \im\ch_d$. Applying (1) of Lemma~\ref{lem:proper} gives the degree bound, which tells us that already $H^+_{k,d}\subseteq\im\ch_{k,d}$. By (3) of the same lemma $\im\ch_{k,d}$ is closed and therefore contains $H_{k,d}$ by the density argument from Lemma~\ref{lem:approx}. The other inclusion, i.e. $\im\ch_{k,d}\subseteq H_{k,d}$ is the easy one and follows from the fact that real symmetric matrices have real eigenvalues only, and again Lemma~\ref{lem:proper} (1).
\end{proof}

The Helton-Vinnikov Theorem now follows directly.
\medskip

\begin{proof}[of Theorem~\ref{thm:heltonvinnikov}]
	Let $F\in\R[X,Y,Z]_d$ be hyperbolic with respect to $e\in\R^3$. By a linear change of variables and rescaling, we can assume that $e=(0,0,1)$ and $F$ is monic in $Z$. Now the hyperbolicity implies that $f:=F(X,1,T)$ is real rooted in $T$. Since $F$ is of total degree $d$ we have that $f\in\R[X,T]_{(1,d)}$, so $f\in H_{1,d}$. Therefore $f\in\im\ch_{1,d}$ by Theorem~\ref{thm:realrooted}, i.e. there exists a linear symmetric matrix $A=XB+C\in S_{1,d}$ such that $f$ is the characteristic polynomial of $A$. But then
	\[F=\det(Z-XB-YC)\]
	is a symmetric linear determinantal representation of $F$ that is definite at $e$.
\end{proof}

Note that for proving a smooth version of Helton-Vinnikov for strictly hyperbolic polynomials we only need the degree correspondence of Lemma~\ref{lem:proper} and not the rather technical approximation step. Moreover the approximation of hyperbolic polynomials by strictly hyperbolic polynomials defining smooth projective curves is somewhat simpler, because the concept of ``simple roots at infinity'' is a bit more straight forward as explained in Remark \ref{rem:realrooted_vs_hyperbolic} in Section~\ref{sec:approx}.

\bibliographystyle{plain}
\bibliography{lit}
\end{document}